\newcommand{\ord}{\operatorname{ord}}
\newcommand{\C}{\mathbb{C}}
\newcommand{\F}{\mathbb{F}}
\newcommand{\N}{\mathbb{N}}
\newcommand{\Q}{\mathbb{Q}}
\newcommand{\R}{\mathbb{R}}
\newcommand{\Z}{\mathbb{Z}}
\newcounter{thm}
\numberwithin{thm}{section}
\numberwithin{equation}{section}
\theoremstyle{definition}
\newtheorem{definition}[thm]{Definition}
\newtheorem{remark}[thm]{Remark}
\newtheorem{algorithm}[thm]{Algorithm}
\theoremstyle{plain}
\newtheorem{proposition}[thm]{Proposition}
\newtheorem{theorem}[thm]{Theorem}
\newtheorem{corollary}[thm]{Corollary}
\newenvironment{ack}{\underline{\bf Acknowledgements:}}{}
\author[T. Cortadellas]{Teresa Cortadellas Ben\'itez}
\address{Departament de Matem\`atiques i Anal\'itica de Dades, IQS School of Engineering, Universitat Ramon Llull. Via Augusta 390, 08017 Barcelona, Spain}
\email{teresa.cortadellas@iqs.url.edu}
\title{On the effective Pourchet's Theorem}
\author[C. D'Andrea]{Carlos D'Andrea}
\address{Departament de Matem\`atiques i Inform\`atica, Universitat de Barcelona. Gran
Via 585, 08007 Barcelona, Spain \&  Centre de Recerca Matem\`atica, Edifici C, Campus Bellaterra, 08193 Bellaterra, Spain
}
\email{cdandrea@ub.edu}
\urladdr{http://www.ub.edu/arcades/cdandrea.html}
\author[A. B. de Felipe]{Ana Bel\'en de Felipe}
\address{Department of Mathematics, Barcelona East School of Engineering (EEBE), Universitat Polit\`ecnica de Catalunya - BarcelonaTech (UPC), Campus Diagonal Bes\`os, Ed. A, Avda. Eduard Maristany, 16, 08019 Barcelona, Spain.}
\email{ana.belen.de.felipe@upc.edu}
\author[J. Hurtado]{Joel Hurtado Moreno}
\address{Departament de Matem\`atiques, Universitat Polit\`ecnica de Catalunya, Av. Diagonal 647, 08028 Barcelona, Spain}
\email{joel.hurtado@estudiantat.upc.edu}
\author[M.E. Montoro]{M. Eul\`alia Montoro}
\address{Departament de Matem\`atiques i Inform\`atica, Universitat de Barcelona. Gran
Via 585, 08007 Barcelona, Spain}
\email{eula.montoro@ub.edu}
\begin{document}

\begin{abstract}
With the aid of Hensel Lemma, we refine the $2$-adic Newton polygon algorithm proposed in \cite{MKV23} to express computationally a given positive univariate polynomial with rational coefficients as a sum of five squares of rational polynomials -the effective Pourchet's Theorem-  and extend it to cover almost all the possible inputs.
We also provide examples which are covered with our methods but cannot be detected by previous conjectural algorithms.
\end{abstract}

\subjclass[2020]{Primary 1208; Secondary 68W30}

\keywords{positive polynomials, sums of squares, Newton Polygon, Hensel Lemma}
\maketitle
\section{Introduction \& statement of the main results} \label{1}

Pourchet's Theorem \cite[Corollaire $4$]{pou71}  states that any polynomial $f(x)\in\Q[x]$ such that $f(t)>0$ for all $t\in\R$ can be expressed as
\begin{equation}\label{pct}
f(x)=f_1(x)^2+f_2(x)^2+f_3(x)^2+f_4(x)^2+f_5(x)^2,
\end{equation}
with $f_i(x)\in\Q[x],\,1\leq i\leq 5.$ This result, combined with the fact that there are polynomials with rational coefficients which cannot be sums of four squares (cf. \cite[Proposition $10$]{pou71}), shows that $5$ is the minimal number of squares of rational polynomials needed to express \emph{any} positive $f(x)\in\Q[x].$

Pourchet's original approach is based on local-global $p$-adic methods, in particular the Haase-Minkowski theorem, see \cite[Chapter $17$]{raj93} for a presentation of this result.  Hence, in principle it is not suitable for an algorithmic adaptation. 
Recently, Magron, Koprowski and Vaccon had developed algorithms to express rational polynomials as sums of two squares (\cite[Algorithm $1$]{MKV23}) and three or four squares (\cite[Algorithm $5$]{MKV23}) if that decomposition was possible.  They went further to propose that one can substract to a positive $f(x)\in\Q[x],$ a suitable even power (i.e. a rational square) of $\frac12$ to produce a decomposition in a sum of $4$ squares which could be treated algorithmically. This is the content of their Algorithm $6$  in \cite{MKV23}, which we describe in Algorithm \ref{algc} below. Their approach is proven to work provided that the $2$-adic valuation of either the constant or the leading coefficient of $f(x)$ is odd, and it is because of this obstruction that they are able to produce an algorithm (\cite[Algorithm $7$]{MKV23}) which decomposes any positive rational $f(x)$ as a sum of $6$ squares of rational polynomials, one more than Pourchet's sharp bound.

At the end of that paper, the authors  propose a procedure (\cite[Algorithm $9$]{MKV23} described in Algorithm \ref{alg9} in this text), which they conjecture succeeds in producing a decomposition of any positive rational polynomial as in \eqref{pct}.

The first main result of this paper is a negative response to this conjecture.
\begin{theorem}\label{ct1}
Algorithm \ref{alg9} does not stop after any finite number of steps for the following family of polynomials:
$$f_{k,N}(x):= \frac{4}{N^2}x^{2(2k+1)}+\frac{1}{N^2}x^{2k+1}+ \frac{4}{N^2}, $$
with $k=0, 1, \ldots, \ N\in\N$ odd, $N>64.$
\end{theorem}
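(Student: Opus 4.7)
The plan is to trace Algorithm~\ref{alg9} step by step on input $f_{k,N}$ and exhibit an arithmetic invariant that persists through every iteration, so that the algorithm can never reach its termination condition.

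The first step is to inspect the $2$-adic profile of $f_{k,N}$. Since $N$ is odd, $\val_2(4/N^2)=2$ is an even positive integer shared by the constant term and the leading coefficient, while the middle coefficient $1/N^2$ is a $2$-adic unit. This places $f_{k,N}$ squarely in the obstruction regime in which Algorithm~\ref{algc} of \cite{MKV23} fails directly, so that Algorithm~\ref{alg9} is forced to enter its pre-processing loop from the outset.

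The heart of the argument is to identify an invariant preserved by every elementary step of Algorithm~\ref{alg9}. I would track the sequence of polynomials $f_{k,N}=g^{(0)},g^{(1)},g^{(2)},\ldots$ produced by successive iterations and verify that at each stage the two extreme coefficients of $g^{(j)}$ retain even positive $2$-adic valuations, while a middle monomial of odd degree survives with a $2$-adic unit coefficient. The symmetric three-term shape of $f_{k,N}$ is engineered so that no rational-square subtraction available to Algorithm~\ref{alg9} can simultaneously break the parity at both ends.

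The main obstacle is to prove the invariant in full generality against every admissible move of Algorithm~\ref{alg9}. This requires inspecting each possible modification of the current polynomial, typically of the form $g^{(j+1)}=g^{(j)}-h(x)^2$, where $h(x)$ is obtained from $2$-adic Newton polygon data via Hensel lifting, and verifying that the parities of the extreme valuations are preserved. The hypothesis $N>64=2^6$ intervenes precisely here: the bound rules out small-height configurations in which a few steps of Hensel refinement could produce enough cancellation to drop one of the extreme valuations to an odd integer, whereas for $N$ beyond this threshold any square subtracted by Algorithm~\ref{alg9} has $2$-adic valuation that stays even at both endpoints and so reproduces the obstruction in $g^{(j+1)}$. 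Once the invariant is established, non-termination is immediate, since Algorithm~\ref{alg9}'s exit condition requires exactly the parity-breaking that the invariant forbids.
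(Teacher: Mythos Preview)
Your proposal rests on a misreading of Algorithm~\ref{alg9}. The algorithm does \emph{not} produce a sequence $g^{(0)},g^{(1)},\ldots$ by iteratively subtracting squares from the current polynomial; rather, it keeps the input $f$ fixed and, for increasing integers $l$, tests whether $f(x)-2^{-2l}$ or $f(x)-2^{-2l}x^{d}$ is irreducible in $\Q_2[x]$. Consequently your proposed invariant about the $2$-adic valuations of the ``extreme coefficients of $g^{(j)}$'' is tracking the wrong object, and the claim that the exit condition is a ``parity-breaking'' on those valuations is simply false: the exit condition is irreducibility over $\Q_2$. To show non-termination you must prove that for every $l$ in the loop both $f_{k,N}(x)-2^{-2l}$ and $f_{k,N}(x)-2^{-2l}x^{d}$ are \emph{reducible} in $\Q_2[x]$, and nothing in your outline addresses this.

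The key idea you are missing is the completing-the-square identity
\[
f_{k,N}(x)=\Big(\tfrac{2}{N}x^{2k+1}+\tfrac{1}{4N}\Big)^{2}+\tfrac{63}{16N^{2}},
\]
from which one sees (using Proposition~\ref{sqr}) that $-\tfrac{63}{16N^{2}}$ is a square in $\Q_2$, so $f_{k,N}$ factors over $\Q_2$ into two polynomials of odd degree $2k+1$; this handles step~(1). The same identity shows that $f_{k,N}(x)-2^{-2l}$ is again a difference of two squares over $\Q_2$ once $l$ is large enough, giving an explicit odd-degree factorization that kills step~(5a); and since $f_{k,N}$ is palindromic, $(f_{k,N})_*=f_{k,N}$, so step~(5b) is the same computation. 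The hypothesis $N>64$ is used only to force the initial value of $l$ to be large enough (at least $6$) so that the constant $\tfrac{63}{16N^{2}}-2^{-2l}$ is the negative of a $2$-adic square for every $l$ visited by the loop; your explanation in terms of ``small-height configurations'' and ``Hensel refinement'' does not correspond to anything the algorithm actually does.
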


The key aspect of this family of counterexamples is given in part by the fact that having  leading or constant coefficients of odd $2$-adic valuation  is a necessary condition for the algorithms in \cite{MKV23} to work, as the following extended family of examples show.

\begin{proposition}\label{dos}
Let $a\in\Z_{>0}.$
If $f(x)=g(x)^2+ 8a-1$ with $g(x)\in\Z[x]$ of degree $d=2k+1$ and  $k\in\Z_{\geq0},$  then the procedure of Algorithm \ref{algc} fails, i.e. the hypothesis on the $2$-adic valuation of the leading coefficient of $f(x)$ is necessary.
\end{proposition}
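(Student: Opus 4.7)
The plan is to check that each $f(x)$ in the family is a valid input to Pourchet's theorem (positivity) while violating the $2$-adic hypothesis that Algorithm \ref{algc} imposes on the leading coefficient of $f$, and then to trace through the algorithm to confirm that no valid sum-of-four-squares decomposition is produced.

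Positivity is immediate, since $f(x) = g(x)^2 + (8a-1) \geq 8a - 1 \geq 7 > 0$ for every $x \in \R$ as $a \geq 1$. Writing $g(x) = c_d x^d + \cdots + c_0$ with $c_d \in \Z \setminus \{0\}$ and $c_i \in \Z$, the leading coefficient of $f$ equals $c_d^2$, so $\val_2(c_d^2) = 2 \val_2(c_d)$ is automatically even. This already contradicts the precondition of Algorithm \ref{algc} that requires the leading coefficient to have odd $2$-adic valuation, regardless of the choice of $g$.

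For completeness I would also inspect the constant term $f(0) = c_0^2 + 8a - 1$. If $c_0$ is even, then $f(0)$ is odd and $\val_2(f(0)) = 0$; if $c_0$ is odd, the congruence $c_0^2 \equiv 1 \pmod 8$ forces $f(0) \equiv 0 \pmod 8$, so $\val_2(f(0)) \geq 3$, with the precise value depending on $c_0$ and $a$. In the sub-family where $c_0$ is even, both the constant and leading coefficients of $f$ simultaneously have even $2$-adic valuation, so the full disjunctive hypothesis discussed in the introduction collapses, illustrating that there is no escape through the constant term either.

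The main obstacle is the last step: following the concrete loop of Algorithm \ref{algc} on such an input and pinpointing the moment at which the lack of an odd $2$-adic valuation at the leading coefficient blocks the square-completion step from yielding a rational solution. I expect this to reduce to a short parity argument on the internal equation the algorithm attempts to solve, showing that under the form $f(x) = g(x)^2 + 8a - 1$ this equation has no rational solution, so the procedure exits without producing a decomposition and the stated hypothesis on $\val_2$ of the leading coefficient is indeed necessary.
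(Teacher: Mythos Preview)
Your proposal correctly checks positivity and observes that the leading coefficient $c_d^2$ has even $2$-adic valuation, but it stops exactly where the content begins. Two essential things are missing. First, you never verify that $f$ is not already a sum of four squares, which is part of the input specification of Algorithm~\ref{algc}; without this, you have not shown that ``$k_d$ odd'' is the only hypothesis violated. Second, and this is the heart of the proposition, you must show that if the algorithm is run blindly it produces a wrong output: it returns $h(x)=2^{-l}$, and the claim is that $f(x)-2^{-2l}$ is \emph{not} a sum of four squares. Your last paragraph flags this as ``the main obstacle'' and guesses at a parity argument, but supplies none.

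The missing idea in both places is the same, and it is not an elementary parity check on ``the internal equation'': it is a $2$-adic factorization. Since $1-8a\equiv 1\pmod 8$, Proposition~\ref{sqr} gives $\alpha\in\Q_2$ with $\alpha^2=-(8a-1)$, so over $\Q_2[x]$
\[
f(x)=g(x)^2-\alpha^2=(g(x)+\alpha)(g(x)-\alpha),
\]
two factors of odd degree $2k+1$; hence $f$ has an irreducible factor of odd degree in $\Q_2[x]$ and by Theorem~\ref{padic} is not a sum of four squares. Exactly the same trick handles the output: for $l\ge 2$,
\[
2^{2l}\bigl(f(x)-2^{-2l}\bigr)=2^{2l}g(x)^2+\bigl(2^{2l}(8a-1)-1\bigr),
\]
and $1-2^{2l}(8a-1)\equiv 1\pmod 8$ is again a square in $\Q_2$, so this also splits into two odd-degree factors and Theorem~\ref{padic} shows $f(x)-2^{-2l}$ is not a sum of four squares. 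This $\Q_2$-factorization via Proposition~\ref{sqr} together with Pourchet's criterion is the whole engine of the proof; the valuation observations you made are correct but do not reach it.
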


Even though Theorem \ref{ct1}  implies that\cite[Algorithm $9$]{MKV23} does not solve the effective Pourchet problem in its full generality, still we can show that a modification of it  actually works in more cases than those shown in that paper. For instance, if the degree of $f(x)$ is a multiple of $4,$ our modified Algorithm \ref{algn} works independently of the valuation of the leading coefficient of $f(x).$

\begin{theorem}\label{uno}
If the degree of $f(x)$ is a multiple of $4,$ then Algorithm \ref{algn}  stops after a finite number of steps and gives the right answer.
\end{theorem}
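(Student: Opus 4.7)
The plan is to interpret Algorithm~\ref{algn} as an augmentation of Algorithm~\ref{algc} in which the scalar test-square $(1/2^\ell)^2$ is replaced by a polynomial test-square $(p(x)/2^\ell)^2$ of degree at most~$d$, the goal being to force the modified polynomial to satisfy the odd-$v_2$ hypothesis required by Algorithm~\ref{algc}. The role of the hypothesis $d=4m$ is structural: since $d/2=2m\in\Z_{\geq0}$, the monomial $x^{2m}$ is a polynomial, so $(rx^{2m}/2^\ell)^2 = r^2 x^d/4^\ell$ is a rational square of degree~$d$ that alters the leading coefficient of $f$ without touching the constant term.

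First, I would choose $r\in\Q$ and $\ell\in\Z_{\ge0}$ so as to meet two constraints. The Archimedean constraint, $g(x):=f(x)-r^2 x^d/4^\ell > 0$ on $\R$, is satisfied whenever $r^2/4^\ell$ is smaller than the positive infimum of $f(x)/(1+x^d)$, which exists because $f>0$ on $\R$ and $a_d>0$. The $2$-adic constraint, that $v_2(a_d-r^2/4^\ell)$ be odd, is handled by a case analysis in $\Q_2^\times/(\Q_2^\times)^2$. The admissible values $r^2/4^\ell$ realise all non-negative rational squares, and the classification of squares in $\Q_2$ (units being $\equiv 1\pmod 8$) shows that in most square classes of $a_d$ some rational $c=r^2/4^\ell$ makes $v_2(a_d-c)$ odd. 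In the residual classes one modifies instead the constant coefficient via a subtraction $(s/2^\ell)^2$, or resorts to a genuine polynomial test-square $((rx^{2m}+s)/2^\ell)^2$ that adjusts leading and constant coefficients simultaneously.

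Second, once $g$ is positive of degree~$d$ with leading or constant coefficient of odd $2$-adic valuation, Algorithm~\ref{algc} decomposes $g$ as a sum of four rational squares; restoring the extracted square yields the decomposition \eqref{pct}. Termination of the combined search follows because the admissible parameters lie in a finite subset of a bounded region controlled by the $2$-adic Newton polygon of~$f$, and Hensel's lemma is invoked to certify each trial by lifting a candidate $2$-adic decomposition to $\Q[x]$.

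The main obstacle I anticipate is the $2$-adic case analysis needed to show that at least one of the available modifications (leading-only, constant-only, or combined) always yields an odd $v_2$ in every class of $\Q_2^\times/(\Q_2^\times)^2$. This is where $4\mid d$ is used in an essential way: it guarantees both that $x^{2m}$ is a polynomial, so leading-coefficient adjustments within $\Q[x]$ are available, and that the higher parity constraint $d\equiv 0\pmod 4$ eliminates a sign/parity obstruction that would otherwise prevent the Hensel-lifting step from producing degree-compatible square roots.
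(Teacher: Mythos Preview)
Your proposal rests on a misreading of Algorithm~\ref{algn}. That algorithm does \emph{not} replace the scalar test-square by a polynomial one: its output is still the constant $h(x)=2^{-l}$, and the only change from Algorithm~\ref{algc} is that the loop condition is $\gcd(d,2l+k_d)\neq 2$ rather than $\neq 1$. Nothing touches the leading coefficient; only the constant term is perturbed. Consequently your entire first paragraph (choosing $r$ so that $v_2(a_d-r^2/4^\ell)$ is odd, case analysis in $\Q_2^\times/(\Q_2^\times)^2$, etc.) is proving termination and correctness of an algorithm other than the one stated.

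Even setting that aside, the strategy you sketch would not give five squares. Algorithm~\ref{algc} does not ``decompose $g$ as a sum of four rational squares''; its input is explicitly assumed \emph{not} to be a sum of four squares, and its output is $h$ with $g-h^2$ a sum of four. So your two-step plan produces
\[
f \;=\; \Big(\tfrac{r x^{2m}}{2^\ell}\Big)^2 + h^2 + (\text{sum of four squares}),
\]
i.e.\ six squares, which is exactly the bound the paper is trying to beat.

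The paper's actual argument is a Newton-polygon computation having nothing to do with adjusting $v_2$ of the leading coefficient or with Hensel lifting. Write $k_d=2k$. The loop terminates because $\gcd(4d_0,2l+2k)=2\gcd(2d_0,l+k)$ equals $2$ for some $l$ in any window of length $2d_0$. For that $l$, and because $l\geq l_2,l_3$, the Newton diagram of $f(x)-2^{-2l}$ is the single segment from $(0,-2l)$ to $(4d_0,2k)$, whose only lattice points are at abscissae $0,\,2d_0,\,4d_0$ since $\gcd(2d_0,l+k)=1$. Hence any factorisation in $\Q_2[x]$ must split the segment at $2d_0$, giving two pure factors of degree $2d_0$ each, which are irreducible by the generalised Eisenstein criterion (Proposition~\ref{GEC}); otherwise the polynomial is itself irreducible. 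Either way all $\Q_2$-irreducible factors have even degree, and Theorem~\ref{padic} gives the sum-of-four-squares conclusion for $f-2^{-2l}$. This is the mechanism that makes $4\mid d$ essential: it guarantees the midpoint abscissa $2d_0$ is even, so the two potential factors have even degree.
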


It  still remains  to find an algorithm to make explicit Pourchet's theorem for more cases than those covered so far. By using the Newton polygon method -as in that paper- we can cover more situations like the following.

\begin{theorem}\label{nos}
Let $f(x)\in\Z[x]$ be a polynomial of (even) degree $d$ such that $f(t)>0$ for all $t\in\R,$ and $f(0)=2^{2a}(4k+3)$ with $a, k\in\Z_{\geq0}$. Then, there exist $N,\,\ell\in\N,\, N$ odd such that 
$f(x)-\left(\frac{1}{2^\ell}x^{d/2}+\frac{2^a}{N}\right)^2$ is a sum of four squares.
\end{theorem}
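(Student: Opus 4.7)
The plan is to exhibit an odd $N \in \N$ and an $\ell \in \N$ such that
\[
g(x) := f(x) - \left(\frac{x^{d/2}}{2^\ell} + \frac{2^a}{N}\right)^2 \in \Q[x]
\]
is strictly positive on $\R$ and has $2$-adic valuation $v_2(g(0))$ odd. Granted those two properties, the sum-of-four-squares decomposition provided in \cite{MKV23} for positive rational polynomials whose constant coefficient has odd $2$-adic valuation applies to $g$, yielding the statement.

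The key algebraic computation is the valuation of $g(0)$. One has
\[
g(0) = 2^{2a}(4k+3) - \frac{2^{2a}}{N^2} = \frac{2^{2a}\bigl(N^2(4k+3) - 1\bigr)}{N^2}.
\]
Since $N$ is odd, $N^2 \equiv 1 \pmod{8}$, so $N^2(4k+3) \equiv 4k+3 \pmod{8}$. Because $4k+3$ is congruent to $3$ or $7$ modulo $8$, the integer $N^2(4k+3)-1$ is congruent to $2$ or $6$ modulo $8$; in either case its $2$-adic valuation equals exactly $1$. As $N^2$ is odd, this yields $v_2(g(0)) = 2a+1$, which is odd for every admissible choice of $N$.

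For positivity, since $f$ has even degree $d$ and is strictly positive on $\R$, the function $f(x)/(1+x^d)$ is continuous, everywhere positive, and tends to the positive leading coefficient of $f$ as $|x| \to \infty$. Hence there exists $c > 0$ with $f(x) \geq c(1+x^d)$ for every real $x$. Using $(u+v)^2 \leq 2u^2 + 2v^2$,
\[
\left(\frac{x^{d/2}}{2^\ell} + \frac{2^a}{N}\right)^2 \leq \frac{x^d}{2^{2\ell - 1}} + \frac{2^{2a+1}}{N^2}.
\]
It suffices to take any odd $N$ with $2^{2a+1}/N^2 < c/2$ and any $\ell$ with $1/2^{2\ell - 1} < c/2$, for then the right-hand side is strictly bounded by $\tfrac{c}{2}(1+x^d) < c(1+x^d) \leq f(x)$ on all of $\R$, giving $g > 0$.

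The main obstacle is conceptual rather than technical: the shape of the subtracted square must be chosen so that $v_2(g(0))$ becomes odd. The denominator of the constant term is forced to be odd by the hypothesis $f(0) = 2^{2a}(4k+3)$; the congruence $4k+3 \equiv 3 \pmod{4}$ is precisely what makes $N^2(4k+3)-1$ carry exactly one factor of $2$. Once this is identified, positivity is a routine growth estimate and the sum-of-four-squares decomposition is delivered by the algorithmic results of \cite{MKV23}.
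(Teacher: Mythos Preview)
There is a genuine gap. You claim that \cite{MKV23} provides a sum-of-four-squares decomposition for every positive polynomial in $\Q[x]$ whose constant coefficient has odd $2$-adic valuation. No such result appears in \cite{MKV23}: their Algorithm~6 shows only that a positive square-free polynomial with $\ord_2(c_d)$ odd (or, by reversal, $\ord_2(c_0)$ odd) can be written as a sum of \emph{five} squares, by subtracting a further square $2^{-2l}$ and then arguing via the Newton diagram that the result is irreducible in $\Q_2[x]$. Invoking that on your $g$ would only give $f$ as a sum of six squares. Worse, the implication you rely on is false in general: $2x^{2}+14$ is positive on $\R$ with $\ord_2(14)=1$, yet since $-7\equiv 1\pmod{8}$ is a square in $\Q_2$ (Proposition~\ref{sqr}), it splits into two distinct linear factors over $\Q_2$, and by Theorem~\ref{padic} it is not a sum of four squares in $\Q[x]$.

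The paper's proof shares your first step, namely $\ord_2(g(0))=2a+1$, but then exploits the parameter $\ell$, which in your argument only serves to secure positivity. For $\ell$ large the leading coefficient of $g$ has valuation $-2\ell$; the points $(i,\ord_2(c_i))$ contributed by $f$ lie in the first quadrant, and the cross term at $x^{d/2}$ has valuation $a+1-\ell$, which sits strictly above the segment $S$ joining $(0,2a+1)$ to $(d,-2\ell)$. Hence $N(g)=S$ for $\ell\gg 0$, and one then picks $\ell$ so that $S$ contains no interior lattice points. The Generalized Eisenstein Criterion (Proposition~\ref{GEC}) then forces $g$ to be irreducible in $\Q_2[x]$, and Theorem~\ref{padic} delivers the four-square decomposition. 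The oddness of $\ord_2(g(0))$ is what makes this Newton-polygon control possible, but it is not by itself a sum-of-four-squares criterion.
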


Note that even though the family $f_{k,N}(x)$ of Theorem \ref{uno} does not fit directly with this statement, it turns out that the polynomial $N^2\cdot f_{k,N}(x-1)$ satisfies the hypothesis of Theorem \ref{nos}, and hence one cand find five elements in $\Q[x]$ such that the sum of their squares is equal to this polynomial. From here it is straightforward to get an expression of $f_{k,N}(x)$ as in \eqref{pct}.

One of the novelties of Theorem \ref{nos} is that we replace the constant $(\frac{1}{2^{l}})^2$ by the square of a polynomial of  degree $d/2$. We will see in the proof of this result that we also use the $2$-adic ``Newton polygon test''  which was the main tool in \cite{MKV23} to prove that our proposed input is a sum of four squares of rational polynomials.

It turns out that one can also use  the $p$-adic Hensel Lemma to get more flexibility and produce new decompositions of a positive $f(x)\in\Q[x]$ as a sum of five squares. The following theorems are proven by using this method. 
\begin{theorem}\label{gr4}
Let $f(x)\in\Z[x]$ be a square-free polynomial of degree $d=4k,\, k\in\N.$ If $f(t)>0$ for all $t\in\R,$   then there exists $\ell_0\in\N$ such that if $\ell\geq\ell_0,$ $$f(x)-\frac{1}{2^{2\ell}}(x^2+x+1)^{2k}$$ is a sum of four squares in $\Q[x].$
\end{theorem}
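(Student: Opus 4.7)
The plan is to verify, for $\ell$ sufficiently large, that $g_\ell(x)$ is positive on $\R$ and a sum of four squares in $\Q_2[x]$, and then conclude via the local--global principle for the Pfister form $\langle\langle -1,-1\rangle\rangle$ together with Cassels--Pfister descent from $\Q(x)$ to $\Q[x]$; at odd primes the condition is automatic.

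\emph{Positivity.} Both $f(x)$ and $(x^2+x+1)^{2k}$ have degree $d=4k$ with positive leading coefficient, and $f>0$ on $\R$, so $M:=\sup_{x\in\R}(x^2+x+1)^{2k}/f(x)$ is finite. Any $\ell_0$ with $4^{\ell_0}>M$ then forces $g_\ell(x)>0$ on $\R$ for every $\ell\geq\ell_0$.

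\emph{Two-adic decomposition via Hensel.} Put $h(x):=(x^2+x+1)^k\in\Z[x]$ and $G_\ell(x):=4^\ell g_\ell(x)=4^\ell f(x)-h(x)^2\in\Z[x]$; a decomposition $G_\ell=\sum y_i^2$ in $\Q_2[x]$ yields one for $g_\ell$ upon dividing each $y_i$ by $2^\ell$. The congruence $G_\ell\equiv -h^2\pmod{4^\ell}$ in $\Z_2[x]$ forces the $2$-adic Newton polygon of $G_\ell$ to be flat at height $0$, while the reduction $G_\ell \bmod 2$ equals the perfect square $h^2=((x^2+x+1)^k)^2\in\F_2[x]$. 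Moreover $-1$ is itself a sum of four squares in $\Z_2$: one may take $-1=3^2+3^2+3^2+(2\tau)^2$ where $\tau\in\Z_2^{\times}$ is a square root of $-7$, which exists because $-7\equiv 1\pmod 8$. Writing $-1=\sum_{i=1}^{4}\sigma_i^2$ with $\sigma_i\in\Z_2$ gives the approximate identity
\begin{equation*}
G_\ell\equiv\sum_{i=1}^{4}(\sigma_i h)^2\pmod{4^\ell}.
\end{equation*}
A quantitative Hensel-type argument---feasible because the gradient $(2\sigma_i h)_i$ of the map $\bfy\mapsto\sum y_i^2$ has a component of minimal $2$-adic Gauss valuation $1$ (some $\sigma_i$ is a unit), while the error has valuation $2\ell$---then converts this congruence, for $\ell$ exceeding a threshold depending only on $f$ and $k$, into an exact decomposition $G_\ell=\sum_{i=1}^{4}y_i^2$ with $y_i\in\Q_2[x]$.

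\emph{Local--global and descent.} At each odd prime $p$, the form $\langle 1,1,1,1\rangle$ is isotropic over $\Q_p$ (since $-1$ is already a sum of three squares there), hence universal over $\Q_p(x)$, so no obstruction arises. The archimedean place is handled by positivity. Combining these with the previous step, the local--global principle for the Pfister form $\langle\langle -1,-1\rangle\rangle$ over $\Q(x)$, together with Cassels--Pfister (reducing Pfister-form representability in $\Q(x)$ to representability in $\Q[x]$), yields $g_\ell$ as a sum of four squares in $\Q[x]$. The main obstacle is executing the Hensel lift inside the polynomial ring $\Q_2[x]$ of bounded degree rather than in a completion: the factor of $2$ in the gradient cannot be removed, so one needs the error valuation $2\ell$ to strictly exceed twice the gradient valuation, which is exactly what the hypothesis $\ell\geq\ell_0$ provides; the square-freeness of $f$ is used to ensure that $G_\ell$ is itself square-free for $\ell$ large, avoiding multiplicity complications in the $2$-adic factorization that would otherwise derail the lifting step.
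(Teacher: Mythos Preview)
Your overall framework---positivity at the real place, triviality at odd primes, a $2$-adic verification, then Cassels--Pfister descent---is essentially the content of Theorem~\ref{padic}, so the substance of the proof lies entirely in the $2$-adic step. That step, however, has a genuine gap. You propose to lift $G_\ell\equiv\sum_i(\sigma_ih)^2\pmod{4^\ell}$ to an exact identity $G_\ell=\sum_iy_i^2$ in $\Q_2[x]$ via a Newton--Hensel iteration. But the differential of $(y_1,\dots,y_4)\mapsto\sum y_i^2$ at the point $(\sigma_1h,\dots,\sigma_4h)$ is the map $(z_1,\dots,z_4)\mapsto 2h\sum_i\sigma_iz_i$, whose image consists only of multiples of $h$. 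The first Newton correction therefore asks for a polynomial $w$ with $2hw$ equal (to leading $2$-adic order) to the error $G_\ell+h^2=4^\ell f$, which would force $h\mid f$---false in general. Your observation about the Gauss valuation of a gradient component concerns the $2$-adic \emph{size} of the Jacobian entries, not its \emph{rank} as a $\Q_2$-linear map between coefficient spaces; because all four initial $y_i$ are scalar multiples of the same polynomial $h$, the linearization is far from surjective onto $\Q_2[x]_{\le 4k}$, and no increase in $2$-adic precision repairs that. (In $\Z_2[[x]]$ the constant term of $h$ is a unit and the iteration would converge, but to power series rather than polynomials of degree $\le 2k$.)

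The paper avoids constructing any $2$-adic decomposition. It applies the criterion of Theorem~\ref{padic}(3) and checks the factorization condition directly: writing $2^{2\ell}f-(x^2+x+1)^{2k}=u\prod p_i^{r_i}$ with $u\in\Z_2^\times$ and $p_i\in\Z_2[x]$ monic irreducible (Proposition~\ref{fact}), reduction modulo $2$ yields $\prod[p_i]^{r_i}=[x^2+x+1]^{2k}$ in $\F_2[x]$. Since $x^2+x+1$ is irreducible over $\F_2$ and each monic $p_i$ keeps its degree upon reduction, every $p_i$ has even degree, and Theorem~\ref{padic} finishes the argument.
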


It is easy to verify that if a square-free $f(x)$ is such that  $f(t)>0$ for all $t\in\R,$  then  for all $g(x)\in\R[x]$ of degree bounded by $d,$  there exists $\ell_0\in\N$ such that if $\ell\geq\ell_0,\ f(t)-\frac{1}{2^{2\ell}}g(t)>0,$ and one can find this bound algorithmically, see Proposition \ref{kont} and Remark \ref{susi}.

\begin{theorem}\label{picky}
Let $f(x)\in\Z[x]$ be a square-free polynomial of degree $d=2(2k+1), \ k\in\N,$ such that $f(t)>0$ for all $t\in\R.$ Then, there exists $\ell_0\in\N$ such that if $\ell\geq\ell_0,$ 

$$f(x)-\frac{1}{2^{2\ell}}(x^2+x+1)^{2k}x^2$$
 is a sum of four squares if $f(0)$ is not of the form $2^{2a}(8b+1),$ with $a,b\in\Z_{\geq0}.$  If this is not the case, then this expression fails to be a sum of four squares for $\ell\gg0$ and $f(x).$
\end{theorem}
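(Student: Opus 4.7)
The key preliminary observation I would use is that $(x^2+x+1)^{2k}x^2 = \bigl(x\,(x^2+x+1)^k\bigr)^2$ is a perfect square in $\Q[x]$ of degree $d = 2(2k+1)$; hence writing
$$g_\ell(x) := f(x) - \frac{1}{2^{2\ell}}(x^2+x+1)^{2k}x^2$$
as a sum of four squares immediately yields Pourchet's five-squares decomposition for $f$, with the distinguished first square being $\bigl(2^{-\ell}x(x^2+x+1)^k\bigr)^2$. My first step is to handle positivity of $g_\ell$ on $\R$: since $f$ is square-free and strictly positive on $\R$, Proposition \ref{kont} together with Remark \ref{susi} supplies an effective threshold $\ell_1$ such that $g_\ell(t) > 0$ for all $t \in \R$ and all $\ell \geq \ell_1$.

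For the positive direction, I would assume $f(0)$ is not of the form $2^{2a}(8b+1)$. By the standard description of $(\Q_2^*)^2$, this is equivalent to $f(0)$ not being a square in $\Q_2$. My plan is to apply the 2-adic Hensel-lemma strategy developed in the proof of Theorem \ref{gr4} to $g_\ell$. Two essential observations feed into this: $g_\ell(0) = f(0)$ (because the subtrahend is divisible by $x^2$), and the leading coefficient becomes $\mathrm{lc}(f) - 2^{-2\ell}$, of 2-adic valuation $-2\ell$ for $\ell$ large. Consequently the 2-adic Newton polygon of $g_\ell$ has a controlled slope structure governed by the $x^2(x^2+x+1)^{2k}$ shape of the subtrahend (irreducible on the $(x^2+x+1)^k$ side because $x^2+x+1$ remains irreducible modulo $2$). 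The non-square condition on $f(0)$ should be the precise input needed so that Hensel lifting produces a factorization of $g_\ell$ over $\Q_2[x]$ corresponding to a sum-of-four-squares decomposition; local conditions at odd primes are handled by enlarging $\ell_1$ to $\ell_0$ so as to avoid the finitely many primes where attention is required.

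For the negative direction, when $f(0) = 2^{2a}(8b+1)$, i.e., $f(0) \in (\Q_2^*)^2$, I expect the 2-adic Hilbert symbols associated to $g_\ell$ to stabilize as $\ell \to \infty$ to a nontrivial class: both the constant and leading coefficients of $g_\ell$ become squares up to even powers of $2$, forcing the quaternion-algebra obstruction $(-1,-1)$ over $\Q_2(x)$ to block any four-squares expression. The hard part will be the precise bookkeeping of the 2-adic Newton polygon of $g_\ell$ as $\ell$ varies, and verifying that the Hensel-lifting step succeeds or fails in a way controlled solely by the $\Q_2$-square class of $f(0)$; the proofs of Theorems \ref{gr4} and \ref{nos} should provide the template, and the asymmetry between the two cases should ultimately come down to whether $(\mathrm{lc}(f) - 2^{-2\ell}, f(0))_2$ is trivial or not for $\ell \gg 0$.
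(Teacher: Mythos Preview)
Your outline starts correctly (positivity via Proposition~\ref{kont}, then $2$-adic analysis), but it misses the actual mechanism in both directions and contains a misconception. First, a misreading: Theorem~\ref{padic} involves only positivity on $\R$ and factorization over $\Q_2$; there are no ``local conditions at odd primes'' to handle. For the positive direction, after clearing denominators to $q(x) = 2^{2\ell}f(x) - (x^2+x+1)^{2k}x^2 \in \Z_2[x]$, Hensel's Lemma (Theorem~\ref{hensel}) factors $q = g\cdot h$ with $[g] = [x^2+x+1]^{2k}$ and $[h] = [x]^2$ in $\F_2[x]$; the factor $g$ is handled exactly as in Theorem~\ref{gr4}. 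The entire substance of the argument lies in showing that the monic quadratic $h$ is \emph{irreducible} over $\Q_2$, equivalently that $q$ has no root in $\Z_2$. You never isolate this step. The paper proves it by writing a hypothetical root as $\alpha_\ell = 2^{a_\ell}(1+2q_\ell)$ and comparing $\ord_2$ on both sides of $2^{2\ell}f(\alpha_\ell) = (\alpha_\ell^2+\alpha_\ell+1)^{2k}\alpha_\ell^2$; this forces $k_0 = \ord_2(c_0)$ to be even and then $2^{-k_0}c_0 \equiv 1 \pmod{8}$, i.e.\ exactly $c_0 = 2^{2a}(8b+1)$. Your Newton-polygon language does not settle whether this degree-$2$ Hensel factor splits, and ``Hensel lifting produces a factorization corresponding to a sum-of-four-squares decomposition'' conflates the existence of the factorization (automatic) with the parity of the degrees of its irreducible pieces (the real issue).

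For the negative direction, the paper does not use Hilbert symbols or quaternion obstructions at all. When $c_0 = 2^{2a}(8b+1)$ it \emph{constructs} a $\Z_2$-root of $q$ directly: take $\gamma = 2^{\ell+a}$, $\delta = \ell+a+1$, verify the hypotheses of the $2$-adic Newton method (Theorem~\ref{newton}), and obtain $\overline{\gamma} \in \Z_2$ with $q(\overline{\gamma}) = 0$. This yields a linear factor over $\Q_2$; Proposition~\ref{dis} guarantees it is simple for $\ell \gg 0$, and Theorem~\ref{padic} then blocks a four-squares decomposition. Your proposed route via the symbol $(\mathrm{lc}(f) - 2^{-2\ell},\, f(0))_2$ is speculative: you have not explained how a value of a Hilbert symbol on two coefficients detects an odd-degree irreducible factor of $g_\ell$ over $\Q_2$, which is what Theorem~\ref{padic} actually requires.
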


Integers of the form $2^{2a}(8b+1)$ are squares in $\Z_2,$ the ring of $2$-adic integers, and all squares in this ring are of this form (it follows from Proposition \ref{sqr}). The condition on the constant coefficient of $f(x)$ in the hypothesis of Theorem \ref{picky}  looks rather artificial, and one may argue that after a linear change of variables $f(x+\alpha),$ which does not change the structure of being a sum of four or five squares as we have done above with $f_{kN}(x)$ above,  we just have to look for $\alpha\in\Q$ such that $f(\alpha)$ is not a square in $\Q_2.$ If one finds such a change, then Theorem \ref{picky} would work in all the cases.
But unfortunately, this cannot always be done as the following  example due to Przemyslaw Koprowski shows it: 
\begin{equation}\label{exx} 4x^6+4x^3+9=(1+2x^3)^2+8
\end{equation} is a polynomial such that evaluated in any rational number gives a square in $\Q_2$ and it is not the square of any polynomial.  So, we cannot claim that with Theorems \ref{gr4} and \ref{picky} we cover all the positive rational polynomials, and extra work is needed in order to have a complete effective approach to Pourchet's Theorem.

The rest of the paper is organized as follows: in Section \ref{back} we establish the necessary notation and previous results needed for the proofs, which are then worked out in Section \ref{2}.
\smallskip

\begin{ack}
These results have been worked out at the Barcelona's Computational Algebra Seminar, hosted by the Math Institute of the University of Barcelona (IMUB). We are grateful to Przemyslaw Koprowski for useful conversations, and for sharing with us the example   \ref{exx}. T. Cortadellas is supported by MICINN research project PID2022-137283NB-C22. 
C. D'Andrea was supported by the Spanish State Research Agency, through the Severo Ochoa and Mar\'ia de Maeztu Program for Centers and Units of Excellence in R\&D (CEX2020-001084-M), and the European  H2020-MSCA-ITN-2019 research project GRAPES and the Spanish MICINN research project PID2023-147642NB-I00. A.B. de Felipe was partially supported by the AGAUR project 2021 SGR 00603. M.E. Montoro is partially supported by grant PID2021-124827NB-I00 funded by MCIN/AEI/ 10.13039/501100011033
and by ``ERDF A way of
making Europe'' by the ``European Union''.
\end{ack}

\bigskip
\section{Background \& Algorithms}\label{back}
In this section we introduce some necessary notation, and known results and algorithms which will be used for the proof of our main results. We start with univariate polynomials, and algorithmic criteria for locating their roots and simplicity.

Let $f(x)=c_0+c_1x+\ldots+c_{d-1}x^{d-1}+c_dx^d=c_d\prod_{j=1}^d (x-\lambda_j)$  with $c_d\neq0,$ be a polynomial with coefficients in $\C.$ For $k=0, 1, \ldots, 2d-2$, denote  
$$s_k=\lambda_1^k+\ldots +\lambda_d^k\in\C,
$$
and note that $s_k\in\Q(c_0,\ldots, c_d),$ to be more precisely each $s_k$ is actually a polynomial in $c_0,\ldots, c_{d-1}$ and has as denominator a power of $c_d$ (cf. (2.3) in \cite{US92}).

Set now
$$S_f=\begin{pmatrix}
s_0& s_1 &\ldots & s_{d-1}\\
s_1& s_2 & \ldots & s_d\\
\vdots &\vdots & \ldots & \vdots\\
s_{d-1} & s_d & \ldots & s_{2d-2}
\end{pmatrix}.
$$

\smallskip

Recall that by Sylvester's Law of Inertia (\cite{syl52}), any symmetric matrix with coefficients in a field $K$ of characteristic different from $2$ induces a quadratic form which can be diagonalized over $K.$ If $K\subset\R,$ the number of zeroes, positive and negative numbers in the diagonal is independent of the diagonalization. The rank of this matrix is the number of non-zero elements in the diagonal, and its signature the difference between the number of positive and negative numbers.
\begin{theorem}[\cite{jac35, jac36}]\label{hankel}
The number of distinct roots of $f(x)$ is equal to the rank of $S_f$. If in addition $f(x)\in\R[x],$ its number of distinct real roots is equal to $\sigma(S_f),$ the signature of the real symmetric matrix $S_f$.
\end{theorem}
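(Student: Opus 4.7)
The plan is to realize the Hankel matrix $S_f$ as a Gram-type sum built from the Vandermonde vectors of the roots of $f$, and then, for the signature claim, to split those contributions according to whether a root is real or belongs to a complex conjugate pair.

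First, I would set $v_j := (1, \lambda_j, \lambda_j^2, \dots, \lambda_j^{d-1})^T$ for each $j$, so that the outer product $v_j v_j^T$ has $(a,b)$-entry $\lambda_j^{a+b}$. Summing over the roots gives the factorization
$$S_f = \sum_{j=1}^{d} v_j v_j^T = W W^T, \qquad W := (\lambda_j^{i-1})_{1\le i,j\le d}.$$
Grouping equal roots, if $\mu_1, \dots, \mu_r$ denote the distinct roots of $f$ with multiplicities $m_1, \dots, m_r$, the same expression reads $S_f = \sum_{k=1}^{r} m_k u_k u_k^T$ with $u_k := (1, \mu_k, \dots, \mu_k^{d-1})^T$. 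Since the $\mu_k$ are pairwise distinct, the Vandermonde determinant ensures that $u_1, \dots, u_r$ are linearly independent over $\C$, so the image of $S_f$ regarded as a complex matrix is their span, which has dimension $r$. This settles the rank statement.

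For the signature, assume $f \in \R[x]$ and split the distinct roots into $p$ real roots $\mu_1, \dots, \mu_p$ and $q$ complex conjugate pairs $\mu_{p+j}, \overline{\mu_{p+j}}$ of common multiplicity $m_{p+j}$, so $r = p + 2q$. The real roots contribute $\sum_{k=1}^{p} m_k u_k u_k^T$, a positive semidefinite form of rank $p$. For each complex pair, writing $u_{p+j} = a_j + \i b_j$ with $a_j, b_j \in \R^d$, a direct expansion gives
$$m_{p+j} \bigl( u_{p+j} u_{p+j}^T + \overline{u_{p+j}}\,\overline{u_{p+j}}^{\,T} \bigr) = 2 m_{p+j} \bigl( a_j a_j^T - b_j b_j^T \bigr),$$
a real symmetric matrix of signature $(1,1)$ on the real span of $\{a_j, b_j\}$. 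The main point to check is that the $r$ real vectors $u_1, \dots, u_p, a_1, b_1, \dots, a_q, b_q$ are linearly independent over $\R$; I would verify this by noting that their $\C$-span coincides with that of the complex Vandermonde family $u_1, \dots, u_p, u_{p+1}, \overline{u_{p+1}}, \dots, u_{p+q}, \overline{u_{p+q}}$, which is $r$-dimensional. Invoking Sylvester's law of inertia on the resulting block-diagonalization then yields signature $p + q(1-1) = p$, which is exactly the number of distinct real roots of $f$.
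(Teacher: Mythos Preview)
Your argument is correct and is precisely the classical Hermite--Vandermonde route: factor $S_f=WW^{T}$, collapse equal roots to read off the rank, and for real $f$ pair each non-real root with its conjugate to obtain a signature-$(1,1)$ block while the real roots give a positive semidefinite piece of rank $p$. The paper itself does not prove this result; immediately after the statement it only refers the reader to \cite[Theorem~2.2]{US92}, where essentially the same decomposition is used, so there is nothing further to compare. The one step you might make slightly more explicit is the final inertia count: since the $r$ independent linear forms $x\mapsto u_k^{T}x$, $x\mapsto a_j^{T}x$, $x\mapsto b_j^{T}x$ can be completed to coordinates on $\R^{d}$, the quadratic form $x^{T}S_f x$ becomes diagonal with $p+q$ positive, $q$ negative, and $d-r$ zero entries, whence $\sigma(S_f)=p$.
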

For a modern proof of this result, see Theorem $2.2$ in \cite{US92}. As an easy consequence of this, we obtain the well-known fact that  all the roots of $f(x)\in\R[x]$ are real if and only if $S_f$ is definite positive. Also, none of the roots of $f(x)\in\R[x]$ is real if and only if the number of positive values in its diagonal form is equal to the number of negative values. 

Note that if we also have $f(x)\in\Q[x]$, then $S_f\in\Q^{d\times d}$ and effective criteria to compute both the rank and the signature of $S_f$ can be obtained by the standard computation of the sign sequence of the principal minors of this matrix, see for instance \cite{fra27}.

\begin{proposition}\label{kont}
Suppose that $f(x)\in\R[x]$ has degree $d$, is square-free and satisfies $f(t)>0$ for all $t\in\R.$ For any $g(x)\in\R[x]$ of degree bounded by $d,$ there exists $\varepsilon_0>0$ such that, if $\varepsilon_0\geq\varepsilon>0,$   $f(t)+\varepsilon\,g(t)>0 \ \forall t\in\R.$ 
\end{proposition}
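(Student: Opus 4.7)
The plan is to reduce the claim to the boundedness of the rational function $g/f$ on $\R$, which is easy given the degree constraint.

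First, I would observe that, since $f(t)>0$ for all real $t$, the degree $d$ is necessarily even and the leading coefficient of $f$ is positive, and in particular $f$ has no real roots. Consequently, the function $t\mapsto g(t)/f(t)$ is continuous on the whole of $\R$.

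Next, I would show that $g/f$ is bounded on $\R$. On any compact interval this is automatic by continuity, so the only issue is the behavior at $\pm\infty$. Writing $g(x)=\sum_{i=0}^{d} b_i x^{i}$ and $f(x)=\sum_{i=0}^{d} c_i x^{i}$ with $c_d>0$ (using $b_i=0$ for $i>\deg g$), one has
\[
\lim_{|t|\to\infty}\frac{g(t)}{f(t)}=\frac{b_d}{c_d},
\]
which is finite (equal to $0$ if $\deg g<d$). Combining this with continuity on a large enough compact set containing all relevant behavior, one concludes that $M:=\sup_{t\in\R}|g(t)/f(t)|<\infty$.

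Finally, if $M=0$ then $g\equiv 0$ and any $\varepsilon_0>0$ works. Otherwise, I would set $\varepsilon_0:=\tfrac{1}{2M}$. For any $0<\varepsilon\leq\varepsilon_0$ and any $t\in\R$,
\[
\bigl|\varepsilon\,g(t)/f(t)\bigr|\leq \varepsilon M\leq \tfrac{1}{2},
\]
so that $1+\varepsilon\, g(t)/f(t)\geq \tfrac{1}{2}$, and therefore
\[
f(t)+\varepsilon\,g(t)=f(t)\Bigl(1+\varepsilon\,\frac{g(t)}{f(t)}\Bigr)\geq \tfrac{1}{2}f(t)>0,
\]
as required.

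There is no serious obstacle: the only nontrivial ingredient is the finiteness of $M$, which rests exactly on the assumption $\deg g\leq \deg f$ together with the fact that $f$ has no real zeros (so no blowup occurs anywhere). The square-free hypothesis on $f$ is not logically needed for this existence statement; I expect it is included with a view toward the effective computation of $\varepsilon_0$ announced in Remark \ref{susi}, where one would presumably estimate $M$ quantitatively from the coefficients of $f$ and $g$ (for instance via root separation bounds for $f$) in order to give an explicit admissible $\varepsilon_0$.
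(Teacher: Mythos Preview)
Your proof is correct and takes a more elementary route than the paper's. The paper argues via the Hermite matrix $S_{f+s\,g}$: since $f$ is square-free, $S_f$ has full rank, and by continuity of the principal minors the signature of $S_{f+s\,g}$ (which counts real roots via Theorem~\ref{hankel}) is stable under small perturbations of $s$; hence $f+\varepsilon g$ has no real roots for small $\varepsilon>0$, and positivity at a single point then gives positivity everywhere. You bypass this machinery entirely by observing that $g/f$ is a continuous function on $\R$ with a finite limit at infinity, hence bounded, and then factoring $f+\varepsilon g = f\,(1+\varepsilon\, g/f)$. Your argument is shorter, does not use the square-free hypothesis (as you rightly note), and already yields the semi-explicit $\varepsilon_0 = 1/(2\sup|g/f|)$. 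The paper's approach, by contrast, is geared toward the effective bound announced in Remark~\ref{susi}: the principal minors of $S_{f+s\,g}$ are explicit polynomials in $s$ with nonzero constant term, so a root bound gives an $\varepsilon_0$ directly from the coefficients of $f$ and $g$; making your $\sup|g/f|$ effective would require a separate estimate (e.g., a lower bound for $\min_{\R} f$ together with an upper bound for $|g|$ on a controlled interval).
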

\begin{proof}
For a new variable $s,$ the symmetric matrix $S_{f+s\, g}$ has its entries in $\R(s),$ and each of its coefficients is actually regular in $s=0$ (i.e. they can be evaluated in $s=0$). So, we have that $S_{f+s\,g}\to S_f$ when $s\to0.$ Here the matrix convergence is meant ``coefficient by coefficient''. Since $f(x)$ is square-free, the rank of $S_f$ is maximal by Theorem \ref{hankel}. Then, for $s$ small enough, the same will happen with $S_{f+s\,g}$. In addition, as there are no real zeroes for $f(x)$ and all the complex roots are different, $S_f$ has maximal rank, and there is an invertible real matrix $C$ such that  $C\cdot S_f\cdot C^T$ has $d/2$ positive values in the diagonal, the other $d/2$ values being negative as they must satisfy  $\sigma(S_f)=0$ thanks to Theorem \ref{hankel}. By continuity, we get that  each of the principal minors of $C\cdot S_{f+s\,g}\cdot C^T$ will have the same sign of the corresponding  principal minor of $C\cdot S_f\cdot C^T$ if $s$ is small enough.  This implies that $f(x)+\varepsilon_0\,g(x)$ has no real roots if $\varepsilon_0>0$ is small enough. In addition, as $f(0)>0,$ for small $\varepsilon_0$ we also have that $f(0)+\varepsilon\,g(0)>0$, which shows that this polynomial is positive everywhere. The result then follows by picking $1\gg\varepsilon_0>0$ satisfying all these conditions. 
\end{proof}
\begin{remark}\label{susi}
If both $f(x)$ and $g(x)$ are polynomials with rational coefficients, explicit bounds for $\varepsilon_0$ can be found in terms of $d$ and the absolute value of the coefficients of the input polynomials. Indeed, the principal $k$-th minor of $S_{f+s\,g}$ is a polynomial in $s$ of degree less than or equal to $kd$ having non-zero constant coefficient. One can then  get straightforwardly a bound for the value of small $s$ keeping this sign. 
\end{remark}

To test if a  given polynomial $f(x)$ with coefficients in a field is square-free, one uses the so-called {\em discriminant} of this polynomial, which can be defined as the resultant between $f(x)$ and $f'(x).$  We will denote it with $\mbox{disc}_x(f(x)).$ To compute it, one can use the determinant of a $(2d-1)\times (2d-1)$ matrix whose inputs are coefficients of $f(x)$ or of $f'(x)$, see Section $5$ of \cite {CLO07} for more on this. From this matrix expression one deduces straightforwardly the following result.
\begin{proposition}\label{dis}
If $f(x)\in\C[x]$ is square-free, and $g(x)\in\C[x]$ of degree bounded by the degree of $f(x),$ then for a new parameter $\lambda,\ \mbox{disc}_x(\lambda\, f(x)+g(x))$ is a nonzero polynomial in $\lambda$, with leading coefficient equal to $\mbox{disc}_x(f(x)).$ 
\end{proposition}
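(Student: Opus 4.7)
The plan is to realize $\mbox{disc}_x(\lambda f(x)+g(x))$ as the determinant of the concrete $(2d-1)\times(2d-1)$ matrix mentioned just before the statement, and then to track the coefficient of the highest power of $\lambda$ in this determinant. Set $d=\deg f$ and write $f(x)=\sum_{i=0}^d a_i x^i$, $g(x)=\sum_{i=0}^d b_i x^i$; by hypothesis $a_d\neq 0$ but $b_d$ is allowed to vanish.

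Let $M(\lambda)$ denote the $(2d-1)\times(2d-1)$ matrix whose rows are the appropriate shifts of the coefficient vectors of $\lambda f(x)+g(x)$ and of $\lambda f'(x)+g'(x)$, so that $\det M(\lambda)=\mbox{disc}_x(\lambda f+g)$. Each nonzero entry of $M(\lambda)$ has the form $\lambda\alpha+\beta$, where $\alpha$ is an entry coming from $f$ or $f'$ and $\beta$ is the corresponding entry coming from $g$ or $g'$. In other words, $M(\lambda)=\lambda A+B$ where $A$ is the analogous matrix built from $f$ and $f'$ alone, so that $\det A=\mbox{disc}_x(f)$ by the very formula recalled before the statement. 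Expanding $\det M(\lambda)$ by multilinearity of the determinant in its $2d-1$ rows shows that $\det M(\lambda)$ is a polynomial in $\lambda$ of degree at most $2d-1$ whose coefficient of $\lambda^{2d-1}$ is obtained by selecting the $\lambda\alpha$ summand in every row; this contribution is precisely $\det A=\mbox{disc}_x(f)$.

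Since $f(x)$ is square-free, $\mbox{disc}_x(f)\neq 0$, so $\det M(\lambda)$ is a nonzero polynomial in $\lambda$ with leading coefficient $\mbox{disc}_x(f)$, as desired. The only step requiring mild care is the identification of the matrix $A$ (obtained by retaining only the $\lambda$-parts of $M(\lambda)$) with the matrix whose determinant computes $\mbox{disc}_x(f)$ without any spurious sign or scaling factor; this is immediate since $M(\lambda)$ and $A$ share the same row-and-column layout by construction, but it is the one bookkeeping point that must be checked so that no stray normalization appears between the two determinants. Once this identification is in place, the rest of the argument is just multilinearity plus the square-free hypothesis.
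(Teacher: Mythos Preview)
Your proof is correct and follows precisely the approach the paper indicates: the paper does not spell out a proof, merely asserting that the claim ``follows straightforwardly'' from the $(2d-1)\times(2d-1)$ determinantal expression for the discriminant, and your argument via $M(\lambda)=\lambda A+B$ and multilinearity is exactly that straightforward deduction. There is nothing to add.
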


\bigskip
We now turn our attention to  local fields, and algorithms to detect sums of squares of polynomials with rational coefficients via testing them in these fields.
For a prime $p\in\N,$ we denote with $\Q_p$ the field of $p$-adic numbers, which is the completion of $\Q$ with respect to the topology given by the metric 
$d(a,b):=p^{-\ord_p{(a-b})}.$  The ring $\Z_p:=\{a\in\Q_p\:/\:d(a,0)\leq1\}$ of $p$-adic integers is a principal ideal domain with the unique maximal ideal $\{a\in\Z_p\:/\:d(a,0)<1\}.$

Finding roots of polynomials in $\Z_p[x]$ is an easier task as in $\Z[x],$ this is one of the reasons why working in this ring is convenient in several cases. For instance, the following $p$-adic version of Newton's method is going to be useful in the text.
\begin{theorem}\label{newton}\cite[Theorem $3$ in Section $5.2$]{BS66}
Suppose that $g(x)\in\Z_p[x],\,\gamma\in\Z_p,$ and $\delta\in\N$ are such that
\begin{itemize}
\item $g(\gamma)\equiv 0\ \ \mbox{mod} \  p^{2\delta+1}\Z_p,$
\item  $g'(\gamma)\equiv 0\ \ \mbox{mod} \  p^{\delta}\Z_p,$ and
\item  $g'(\gamma)\not\equiv 0\ \ \mbox{mod} \  p^{\delta+1}\Z_p,$ 
\end{itemize}
then there exists $\overline{\gamma}\in\Z_p$ such that $g(\overline{\gamma})=0,$ and $\overline{\gamma}\equiv\gamma\ \  \mbox{mod} \ p^{\delta+1}\Z_p.$
\end{theorem}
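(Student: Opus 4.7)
The plan is the standard $p$-adic Newton iteration. Set $\gamma_0:=\gamma$ and define recursively $\gamma_{n+1}:=\gamma_n-g(\gamma_n)/g'(\gamma_n).$ I would first establish the Taylor expansion in $\Z_p$: for $g\in\Z_p[x]$ and $\alpha,h\in\Z_p,$ one has $g(\alpha+h)=g(\alpha)+g'(\alpha)h+h^2R(\alpha,h)$ and $g'(\alpha+h)=g'(\alpha)+hS(\alpha,h)$ with $R(\alpha,h),S(\alpha,h)\in\Z_p,$ since the binomial coefficients arising in the Taylor expansion lie in $\Z.$

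The core of the argument is to track two quantities simultaneously along the iteration: the ``excess'' $e_n:=\val_p(g(\gamma_n))-2\delta$ and the valuation $\val_p(g'(\gamma_n)).$ The hypotheses give $e_0\geq 1$ and $\val_p(g'(\gamma_0))=\delta.$ Writing $h_n:=-g(\gamma_n)/g'(\gamma_n),$ I would inductively show two facts. First, $\val_p(h_n)=\val_p(g(\gamma_n))-\val_p(g'(\gamma_n))=\delta+e_n\geq\delta+1;$ using the Taylor expansion for $g'$ and the strict inequality $\val_p(h_nS)\geq\delta+1>\delta=\val_p(g'(\gamma_n)),$ one gets $\val_p(g'(\gamma_{n+1}))=\delta$ exactly, so the iteration never divides by something with an unexpectedly large valuation. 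Second, from $g(\gamma_{n+1})=g(\gamma_n)+g'(\gamma_n)h_n+h_n^2R=h_n^2R$ one obtains $\val_p(g(\gamma_{n+1}))\geq 2\val_p(h_n)=2\delta+2e_n,$ i.e.\ $e_{n+1}\geq 2e_n.$ Thus $e_n$ doubles at each step and $e_n\to\infty.$

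With these two invariants in place, the rest is soft analysis. The bound $\val_p(\gamma_{n+1}-\gamma_n)=\val_p(h_n)\geq\delta+e_n\to\infty$ shows that $\{\gamma_n\}$ is $p$-adically Cauchy, hence converges to some $\overline{\gamma}\in\Z_p$ by completeness. Continuity of the polynomial map $g$ plus $\val_p(g(\gamma_n))\to\infty$ gives $g(\overline{\gamma})=0.$ Finally, since $\overline{\gamma}-\gamma_0=\sum_{n\geq 0}h_n$ and every summand has valuation at least $\delta+1,$ the ultrametric inequality yields $\overline{\gamma}\equiv\gamma_0\pmod{p^{\delta+1}\Z_p},$ which is the required congruence.

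The main obstacle, and the reason the usual ``naive'' Hensel statement (which assumes $\val_p(g'(\gamma))=0$) does not suffice here, is the careful bookkeeping at step one: one must verify that the valuation of the derivative stays \emph{exactly} $\delta$ at every iterate, not merely $\geq\delta.$ This is where the hypothesis $g'(\gamma)\not\equiv 0\pmod{p^{\delta+1}}$ is used in an essential way, together with the strict inequality $e_n\geq 1$ needed so that $\val_p(h_n)=\delta+e_n$ strictly exceeds $\delta.$ Everything else is a routine geometric convergence of the Newton sequence.
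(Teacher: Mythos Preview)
The paper does not actually supply a proof of this theorem: it is quoted verbatim from Borevich--Shafarevich \cite[Theorem~3, \S5.2]{BS66} and used as a black box in the proof of Theorem~\ref{picky}. So there is no ``paper's own proof'' to compare against.

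That said, your argument is the standard $p$-adic Newton iteration and is correct. The two invariants you track---that $\val_p(g'(\gamma_n))$ stays \emph{exactly} equal to $\delta$, and that the excess $e_n=\val_p(g(\gamma_n))-2\delta$ at least doubles at each step---are precisely what is needed, and your use of the Taylor remainder with integral coefficients is the right mechanism. One small point worth making explicit in the induction is that $h_n\in\Z_p$ (since $\val_p(h_n)=\delta+e_n\geq0$), so that $\gamma_{n+1}\in\Z_p$ and the Taylor expansion with $R,S\in\Z_p$ continues to apply; you use this implicitly but it is the hinge of the whole iteration. Otherwise the sketch is complete and matches the classical treatment.
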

Irreducibility tests are also easier to find in the $p$-adics. For instance, the following statement is  ``Gauss'Lemma'' for polynomials in $\Z_p[x].$ 
\begin{proposition}\label{fact} \cite[Lemma $6.3.7$ and Problem $133$ ]{gou}
Suppose that $f(x)\in\Z_p[x]$  factors in a non-trivial way as a product $f(x) =g(x) \,h(x)$, with $g(x)$ and $h(x) \in \Q_p[x].$ Then, there exist non-constant polynomials   $g_0(x)$ and $h_0(x)\in\Z_p[x]$
such that $f(x)=f_0(x)\,g_0(x).$ If in addition $f(x),\,g(x)$ and $h(x)$ are monic, then both $g(x)$ and $h(x)$ are also in $\Z_p[x]$ (so one can take $g_0(x)$ as $g(x)$ and $h_0(x)$ as $h(x)$).  
\end{proposition}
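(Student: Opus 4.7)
The plan is to reduce this to the standard $p$-adic Gauss's Lemma via the notion of content. For a nonzero $g(x)=\sum_i g_i x^i\in\Q_p[x]$ I would set $v(g):=\min_i \val_p(g_i)$; then $g\in\Z_p[x]$ iff $v(g)\geq 0$, and $g$ is primitive (content a unit) iff $v(g)=0$. The statement we want is essentially a consequence of the multiplicativity $v(gh)=v(g)+v(h)$, together with a careful redistribution of powers of $p$.

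The central step is to establish this multiplicativity. Write $g'=p^{-v(g)}g$ and $h'=p^{-v(h)}h$, both primitive elements of $\Z_p[x]$. Reducing modulo the maximal ideal $p\Z_p$, the classes $\ov{g'},\,\ov{h'}\in\F_p[x]$ are both nonzero, and since $\F_p[x]$ is an integral domain their product is nonzero in $\F_p[x]$. This forces $v(g'h')=0$, whence
\[
v(gh)=v\bigl(p^{v(g)+v(h)}g'h'\bigr)=v(g)+v(h).
\]
With this in hand, given $f=gh\in\Z_p[x]$ with $g,h\in\Q_p[x]$ non-constant, I would define
\[
g_0:=p^{-v(g)}g,\qquad h_0:=p^{v(g)}h.
\]
Then $g_0h_0=f$, $g_0$ is primitive and hence lies in $\Z_p[x]$, and $v(h_0)=v(g)+v(h)=v(f)\geq 0$, so $h_0\in\Z_p[x]$ as well. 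Both factors remain non-constant since they are nonzero scalar multiples of $g$ and $h$.

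For the monic case, if $f,g$ and $h$ are all monic then each has leading coefficient $1\in\Z_p^\ast$, which forces $v(g)\leq 0$, $v(h)\leq 0$, and $v(f)=0$. Combined with the additivity $v(g)+v(h)=v(f)=0$, this yields $v(g)=v(h)=0$, so $g$ and $h$ already lie in $\Z_p[x]$ without any modification. The only delicate point in the whole argument is the mod-$p$ reduction step, which is precisely the classical Gauss Lemma transported to $\F_p[x]$; everything else is careful bookkeeping of $p$-adic valuations when shifting a power of $p$ from one factor to the other.
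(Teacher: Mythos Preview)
Your argument is correct and is exactly the standard content/valuation proof of the $p$-adic Gauss Lemma. Note that the paper does not actually prove this proposition: it simply quotes it from Gouv\^ea's textbook (Lemma~6.3.7 and Problem~133), so there is no ``paper's own proof'' to compare against. Your write-up is precisely the argument one finds in that reference, with the multiplicativity $v(gh)=v(g)+v(h)$ established via reduction to the integral domain $\F_p[x]$ and the monic case handled by the observation that a monic polynomial has $v\le 0$ from its leading coefficient.
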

\begin{corollary}\label{cori}
If a monic $f(x)\in\Z_p[x]$ has a root $x_0\in\Q_p$, then $x_0\in\Z_p.$ 
\end{corollary}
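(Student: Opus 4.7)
My plan is to prove the corollary by a direct ultrametric valuation argument, which is cleaner than invoking the full strength of Proposition \ref{fact}. Assume for contradiction that $x_0\in\Q_p$ is a root of $f(x)=x^d+c_{d-1}x^{d-1}+\cdots+c_0$ but $x_0\notin\Z_p$; by the definition of $\Z_p$ recalled in the paper, this means $\ord_p(x_0)=-k$ for some integer $k\geq1$.

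First I would compute the $p$-adic valuation of each monomial in the identity $0=f(x_0)=x_0^d+\sum_{i=0}^{d-1}c_i x_0^i$. Since each coefficient $c_i\in\Z_p$ satisfies $\ord_p(c_i)\geq0$, we obtain $\ord_p(c_i x_0^i)\geq -ik$ for every $i\leq d-1$, while the leading term satisfies $\ord_p(x_0^d)=-dk$ exactly. Because $k\geq1$, the strict inequality $-dk<-ik$ holds for all $i<d$, so the leading term has strictly smaller valuation than every other term in the sum.

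The key step is then the non-archimedean property of $\ord_p$: when one term of a finite sum has strictly smaller valuation than all of the remaining terms, the valuation of the sum equals that of the dominant term. This forces $\ord_p(f(x_0))=-dk$, a finite integer, which contradicts $f(x_0)=0$ (for which the valuation would have to be $+\infty$). Hence no root of $f$ in $\Q_p$ can lie outside $\Z_p$.

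There is really no serious obstacle here — the argument is essentially three lines, and the only subtlety is checking that the bound $k\geq 1$ is what produces the \emph{strict} inequality needed to single out a dominant term. If one prefers to remain inside the framework already developed in the paper, the same conclusion drops out of Proposition \ref{fact}: performing polynomial division in $\Q_p[x]$ writes $f(x)=(x-x_0)h(x)$ with both factors monic in $\Q_p[x]$, and then the ``monic case'' of that proposition forces $x-x_0\in\Z_p[x]$, so that $x_0\in\Z_p$.
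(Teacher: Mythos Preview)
Your proof is correct. The paper itself gives no explicit proof of this corollary: it is stated immediately after Proposition~\ref{fact} and is meant to follow from it in the way you sketch at the end of your proposal (factor $f(x)=(x-x_0)h(x)$ with both factors monic in $\Q_p[x]$, then the monic case of Proposition~\ref{fact} forces $x-x_0\in\Z_p[x]$, hence $x_0\in\Z_p$).

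Your primary argument, the direct ultrametric computation showing that the leading term $x_0^d$ would strictly dominate all others if $\ord_p(x_0)<0$, is a genuinely different and more elementary route: it bypasses Gauss's Lemma entirely and uses only the strong triangle inequality. The advantage of your approach is that it is self-contained and makes the integrality of roots transparent; the advantage of the paper's intended route is simply economy, since Proposition~\ref{fact} is already on the table. Since you also record the Proposition~\ref{fact} derivation, your write-up in fact subsumes the paper's argument.
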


Another very useful tool for dealing with polynomials and factorizations in $\Z_p[x]$ is the so-called ``Hensel's Lemma for Polynomials''. To state it properly, recall that $\Z_p/p\Z_p\simeq \Z/ p\Z=:\F_p,$ the field of integers modulo $p.$ For $f(x)\in\Z_p[x],$ we denote with $[f(x)]\in(\Z_p/p\Z_p)[x]$ the polynomial  whose coefficients are the classes modulo $p$ of the coefficients of $f(x).$

\begin{theorem}\label{hensel}\cite[Theorem $4.7.2$ ]{gou}
Let $f(x)\in\Z_p[x]$
be a polynomial with coefficients in $\Z_p,$ and assume that there exist $g_1(x)$ and $h_1(x)\in\Z_p[x]$ such that
\begin{enumerate}
\item $g_1(x)$ is monic,
\item $[g_1(x)]$ and $[h_1(x)]$ are coprime, and
\item $[f(x)]=[g_1(x)][h_1(x)].$
\end{enumerate}

Then there exist polynomials $g(x), h(x) \in\Z_p[x]$ such that
\begin{enumerate}
\item $g(x)$ is monic,
\item  $[g(x)]=[g_1(x)],$ and $[h(x)]=[h_1(x)] ,$ and
\item $f(x) = g(x)h(x).$
\end{enumerate}
\end{theorem}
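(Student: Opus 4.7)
The plan is to prove this by iterated approximation modulo successive powers of $p$. Specifically, I would build by induction on $n\geq 1$ polynomials $g_n(x), h_n(x)\in\Z_p[x]$ satisfying: (i) $g_n$ is monic with $\deg g_n=\deg g_1$; (ii) $\deg h_n\leq \deg f-\deg g_1$; (iii) $[g_n]=[g_1]$ and $[h_n]=[h_1]$; and (iv) $f(x)\equiv g_n(x)h_n(x)\pmod{p^n}$. The base case $n=1$ is exactly the hypothesis. Once the induction is in place, the sequences $(g_n)$ and $(h_n)$ are Cauchy in the $p$-adic topology on the (finite-dimensional) space of polynomials of bounded degree, and by completeness of $\Z_p$ they converge coefficient by coefficient to polynomials $g,h\in\Z_p[x]$ enjoying the three conclusions.

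For the inductive step, assuming $g_n,h_n$ are constructed, I would write $f-g_nh_n=p^n r(x)$ with $r(x)\in\Z_p[x]$ of degree at most $\deg f$ (since $g_n$ is monic of the correct degree and $\deg h_n\leq \deg f-\deg g_1$). Looking for corrections $u(x),v(x)\in\Z_p[x]$ with $\deg u<\deg g_1$, I set $g_{n+1}:=g_n+p^n u$ and $h_{n+1}:=h_n+p^n v$, so that
\begin{equation*}
f-g_{n+1}h_{n+1}=p^n\bigl(r-h_n u-g_n v\bigr)-p^{2n}uv.
\end{equation*}
Since $2n\geq n+1$, the condition to pass to step $n+1$ reduces to the congruence $[r]=[h_1][u]+[g_1][v]$ in $\F_p[x]$. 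By coprimality of $[g_1]$ and $[h_1]$, Bézout in the PID $\F_p[x]$ produces $A,B\in\F_p[x]$ with $A[g_1]+B[h_1]=1$; multiplying by $[r]$ and then performing Euclidean division of $[r]B$ by $[g_1]$ to force its remainder to have degree less than $\deg g_1$ (absorbing the quotient times $[h_1]$ into the other cofactor) yields suitable classes $[u],[v]$, which I lift to $\Z_p[x]$ preserving degrees.

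The degree bookkeeping then shows that the invariants persist: $g_{n+1}$ is monic of degree $\deg g_1$ because $\deg(p^nu)<\deg g_1$, while the relation $[g_1][v]=[r]-[h_1][u]$ forces $\deg[v]\leq\deg f-\deg g_1$, keeping $h_{n+1}$ within the allowed range. The congruences $[g_{n+1}]=[g_1]$ and $[h_{n+1}]=[h_1]$ hold since the correction terms are divisible by $p$. Finally, as $g_{n+1}-g_n\in p^n\Z_p[x]$ and similarly for $h_n$, the sequences are $p$-adically Cauchy, so their limits $g,h$ are well-defined elements of $\Z_p[x]$ satisfying $f=gh$, $g$ monic, $[g]=[g_1]$, and $[h]=[h_1]$.

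I expect the main technical obstacle to be precisely this degree bookkeeping: maintaining $g_n$ strictly monic of degree $\deg g_1$ throughout the induction is what forces the Euclidean-division trick on $[r]B$, and without that control the limit could inflate its degree or lose monicity. Everything else—Bézout in $\F_p[x]$ together with the $p$-adic completeness of $\Z_p$—is entirely standard.
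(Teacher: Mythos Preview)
The paper does not actually prove this statement: Theorem~\ref{hensel} is quoted verbatim from \cite[Theorem~4.7.2]{gou} and used as a black box in the proofs of Theorems~\ref{gr4} and~\ref{picky}. So there is nothing in the paper to compare your argument against.

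That said, your proposal is the standard successive-approximation proof one finds in Gouv\^ea's book and elsewhere, and it is essentially correct. One small point you glossed over at the base step: the hypotheses of the theorem say nothing about $\deg h_1$, only about $[h_1]$, so to launch your invariant (ii) you should first note that since $g_1$ is monic, $\deg[g_1]=\deg g_1$, whence $\deg[h_1]=\deg[f]-\deg g_1\leq \deg f-\deg g_1$, and then replace $h_1$ by any lift of $[h_1]$ of that degree. After that the degree bookkeeping you describe goes through exactly as written.
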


\begin{definition}
The {\em Newton diagram} $N(f)$ of $f(x)=\sum_{i=0}^n a_i x^i\in\Q_p[x]$ is the lower boundary of the convex hull of the subset of $\R^2$ consisting of all points $(i,\, \ord_p (a_i))$ for all $a_i\neq0.$
\end{definition}
\begin{definition}
We say that $f(x)\in\Q_p[x]$ is {\em pure} if $f(0)\neq0$ and $N(f)$ is a segment.
\end{definition}

There are very straightforward algorithms to deal with irreducible polynomials which are pure.

\begin{proposition}\label{GEC} \cite[``Generalized Eisenstein Criterion'' Lemma 3.5]{CG00}
Suppose that $f(x)\in\Q_p[x]$ is a pure polynomial with $N(f)$ having slope $k/\deg(f),$ where $\gcd(k,\,\deg(f))=1.$ Then, $f(x)$ is irreducible.
\end{proposition}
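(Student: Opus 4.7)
The plan is to reduce irreducibility to a numerical obstruction coming from the slope of $N(f)$, using the standard correspondence between slopes of the Newton polygon and $p$-adic valuations of the roots. Specifically, since $N(f)$ consists of the single segment joining $(0,\ord_p(a_0))$ to $(n,\ord_p(a_n))$ of slope $k/n$, every root $\alpha$ of $f$ in a fixed algebraic closure $\overline{\Q_p}$ has valuation $v(\alpha)=-k/n$. This is a classical consequence of writing $a_0/a_n=(-1)^n\prod\alpha_i$ together with the non-Archimedean triangle inequality, and it is really the core analytic input of the argument; I would cite it (or derive it in one line as above) rather than reprove it.

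Next I would argue by contradiction. Suppose $f(x)=g(x)h(x)$ is a nontrivial factorization in $\Q_p[x]$, and set $m:=\deg g$ with $1\leq m\leq n-1$. Since $f(0)\neq 0$ (purity), neither $g$ nor $h$ vanishes at $0$, so the leading and constant coefficients of $g$ both lie in $\Q_p^\times$ and therefore have \emph{integer} $p$-adic valuation. Writing $g(x)=c_m\prod_{j=1}^m(x-\beta_j)$, the roots $\beta_j$ are among the roots of $f$, hence each has valuation $-k/n$, and consequently
\[
\ord_p(g(0))-\ord_p(c_m)\;=\;\sum_{j=1}^m\ord_p(\beta_j)\;=\;-\frac{mk}{n}.
\]
The left-hand side is an integer, so $n\mid mk$; combined with $\gcd(k,n)=1$ this forces $n\mid m$, contradicting $1\leq m<n$.

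The only subtle point is the root–slope correspondence for pure polynomials, which is where all the $p$-adic content is concentrated; once that is granted, the rest is a one-line divisibility check. One could alternatively package the argument through Proposition \ref{fact} by first passing to a monic scalar multiple and working with a factorization in $\Z_p[x]$, but this is not needed: the valuation computation goes through directly over $\Q_p$ and gives the cleanest proof.
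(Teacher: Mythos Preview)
Your argument is correct and is the standard proof of the Generalized Eisenstein Criterion via the Newton polygon--root valuation correspondence. Note, however, that the paper does not give its own proof of this proposition: it is stated with a citation to \cite[Lemma~3.5]{CG00} and used as a black box. So there is nothing to compare against in the paper itself; you have simply supplied a valid proof where the authors chose to quote the literature.

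One minor remark on exposition: your parenthetical ``one-line'' justification for the slope--valuation correspondence (via $a_0/a_n=(-1)^n\prod\alpha_i$ and the ultrametric inequality) is not quite enough as written---that identity alone only gives $\sum_i v(\alpha_i)=-k$, not $v(\alpha_i)=-k/n$ for each $i$. The full statement requires the usual Newton polygon argument using all the elementary symmetric functions (or, equivalently, the fact that a pure polygon forces all roots to share the same valuation). You clearly intend to cite this as a known result rather than reprove it, which is perfectly appropriate; just be aware that the hint you sketch does not by itself pin down the individual valuations.
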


Another useful tool we will need is a characterization of squares in $\Q_2.$
\begin{proposition}\label{sqr} \cite[Problem 124]{gou}
A unit $u\in\Z_2$ is a square in $\Q_2$ if and only if $u\equiv 1\,\mbox{mod}\  8\Z_2.$
\end{proposition}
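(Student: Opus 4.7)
The proof splits naturally into two directions, and both admit short arguments using tools already assembled in this section.

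For the necessity, I would first note that since $u\in\Z_2$ is a unit, $\ord_2(u)=0$. Writing $u=v^2$ with $v\in\Q_2$ forces $2\,\ord_2(v)=0$, so $\ord_2(v)=0$ and $v$ is itself a unit in $\Z_2$. Reducing modulo $8$, the group of units $(\Z_2/8\Z_2)^\times$ consists of the classes of $\{1,3,5,7\}$, and a direct check gives $1^2\equiv 3^2\equiv 5^2\equiv 7^2\equiv 1\ \mbox{mod}\ 8$. Therefore $u\equiv v^2\equiv 1\ \mbox{mod}\ 8\Z_2$.

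For the sufficiency, my plan is to apply the $2$-adic Newton method stated in Theorem \ref{newton} to the polynomial $g(x)=x^2-u\in\Z_2[x]$, taking the approximate root $\gamma=1$ and $\delta=1$. The three hypotheses are immediate from the assumption: $g(1)=1-u\equiv 0\ \mbox{mod}\ 2^{2\delta+1}\Z_2=8\Z_2$ is exactly the hypothesis; the derivative $g'(x)=2x$ gives $g'(1)=2$, so $g'(1)\equiv 0\ \mbox{mod}\ 2^{\delta}\Z_2$ while $g'(1)\not\equiv 0\ \mbox{mod}\ 2^{\delta+1}\Z_2$. Theorem \ref{newton} then produces $\overline{\gamma}\in\Z_2$ with $\overline{\gamma}^2=u$, so that $u$ is a square in $\Q_2$ (in fact already in $\Z_2$).

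The point worth emphasising, rather than a real obstacle, is that the hypothesis must be taken modulo $8$ and not modulo $2$: because $g'(1)=2$ has positive $2$-adic valuation, the naive formulation of Hensel's lemma does not apply, and one genuinely needs the refined form of Theorem \ref{newton} with $\delta=1$ to absorb this loss of one unit of precision. Beyond this single careful choice, the whole argument is mechanical, and I do not foresee any further difficulty.
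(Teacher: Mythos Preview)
Your argument is correct in both directions. The necessity is the standard observation that every odd $2$-adic integer squares to $1$ modulo $8$, and the sufficiency is a clean application of Theorem \ref{newton} with $g(x)=x^2-u$, $\gamma=1$, $\delta=1$; all three hypotheses are verified exactly as you describe.

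There is nothing to compare against in the paper itself: Proposition \ref{sqr} is stated without proof and simply attributed to \cite[Problem 124]{gou}. Your choice to derive it from Theorem \ref{newton}, which the paper has already recorded, is a good one because it makes the result self-contained within the paper's toolkit rather than depending on an external exercise. The remark that the naive Hensel lemma (simple-root version) is insufficient here because $g'(1)=2$ is not a unit, and that one genuinely needs the refined statement with $\delta=1$, is exactly the point of the exercise and worth keeping.
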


The connection between irreducibility and sums of squares of polynomials is given by the following result.

\begin{theorem}\label{padic}  \cite[Proposition $10$]{pou71}
Let $f(x)=\sum_{i=0}^d c_i x^i\in\Q[x]$ with $c_d\neq0.$ The following conditions are equivalent:
\begin{enumerate}
\item $f(x)$ is a sum of $4$ squares in $\Q[x];$
\item $c_d>0$ and for all prime factor $p(x)$ of $f(x)$ of odd multiplicity, $-1$ is a sum of two squares in  $\Q[x]/(p(x));$
\item $f(t)\geq0 \ \forall t\in\R,$ and in $\Q_2[x],$ every prime factor of $f(x)$ of odd multiplicity has even degree.
\end{enumerate}
\end{theorem}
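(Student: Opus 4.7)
The strategy is to invoke the Hasse--Minkowski local-global principle for quadratic forms over $\Q(x)$. Writing $f$ as a sum of four squares in $\Q[x]$ is (after clearing denominators) equivalent to the five-variable form $q = \langle 1,1,1,1,-f\rangle$ being isotropic over $\Q(x)$, so the task is to match this global isotropy with the local conditions in $(2)$ and $(3)$.

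The implication $(1)\Rightarrow(3)$ is direct. If $f=f_1^2+f_2^2+f_3^2+f_4^2$ then $f(t)\geq 0$ on $\R$, which combined with $c_d\ne 0$ and $\deg f$ even forces $c_d>0$. The same decomposition holds in $\Q_2[x]$; localizing at an irreducible factor $p(x)\in\Q_2[x]$ appearing with odd multiplicity, a standard valuation argument on the discrete valuation ring $\Q_2[x]_{(p)}$ produces a non-trivial zero of $Y_1^2+\cdots+Y_4^2$ over the residue field $\Q_2[x]/(p(x))$. Since the level of $\Q_2$ is $4$, a Springer-type computation in the Witt ring of $\Q_2$ then forces $\deg p(x)$ to be even.

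For $(3)\Rightarrow(1)$ I would verify isotropy of $q$ at every place $v$ of $\Q(x)$. At the archimedean place this is $f\geq 0$ on $\R$. At every non-dyadic discrete place the residue field has level at most $2$, so $\langle 1,1,1,1\rangle$ is already universal there and $q$ is isotropic for free. At a dyadic place, i.e. one attached to an irreducible factor of $f$ in $\Q_2[x]$, one decomposes $q$ into its first and second residue forms via Springer's theorem and uses the even-degree hypothesis in $(3)$ to ensure that one of those residue forms is isotropic, which lifts to isotropy of $q$ locally. The equivalence $(2)\Leftrightarrow(3)$ is then obtained by restricting, for each odd-multiplicity prime factor $p(x)\in\Q[x]$, to the number field $K_p:=\Q[x]/(p(x))$: the statement ``$-1$ is a sum of two squares in $K_p$'' is a Hasse--Minkowski question for the ternary form $\langle 1,1,1\rangle$ over $K_p$, whose obstructions live at the real and dyadic places of $K_p$. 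The condition $c_d>0$ together with $f\geq 0$ on $\R$ forces $K_p$ to be totally imaginary, killing the real obstruction; and the dyadic places of $K_p$ are in bijection with the $\Q_2$-irreducible factors of $p(x)$, with local degrees equal to their $\Q_2$-degrees, so the dyadic obstruction vanishes exactly when each such factor has even degree, matching $(3)$.

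The hard part is the dyadic analysis. Because $s(\Q_2)=4$ one cannot appeal to naive universality of $\langle 1,1,1,1\rangle$ at dyadic places, and one must instead work with the residue form machinery for quadratic forms over complete discretely valued fields of residue characteristic $2$, and then keep careful track of the correspondence between $\Q_2$-irreducible factors of $p(x)$ and dyadic places of $K_p$. Once this bookkeeping is set up cleanly, each of the three local conditions becomes an explicit and finite check, and Hasse--Minkowski patches them back to the global equivalence $(1)\Leftrightarrow(2)\Leftrightarrow(3)$.
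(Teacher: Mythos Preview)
The paper does not prove this theorem; it is quoted verbatim from \cite[Proposition~10]{pou71} and used as a black box, so there is no in-paper argument to compare against. Your treatment of $(2)\Leftrightarrow(3)$ is correct and is exactly how Pourchet obtains it: for each odd-multiplicity irreducible $p(x)\in\Q[x]$ one applies the genuine Hasse--Minkowski theorem to $\langle 1,1,1\rangle$ over the number field $K_p=\Q[x]/(p(x))$; totally-imaginary corresponds to $f\ge 0$ on $\R$, and the dyadic local obstruction vanishes precisely when every $\Q_2$-irreducible factor of $p(x)$ has even degree. Your $(1)\Rightarrow(3)$ is also fine once you add that the level of any field is a power of $2$ (Pfister), so $s(\Q_2[x]/(p))\le 3$ already forces $s\le 2$.

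The gap is in $(3)\Rightarrow(1)$. You invoke a ``Hasse--Minkowski principle over $\Q(x)$'' and check isotropy of $\langle 1,1,1,1,-f\rangle$ at an ``archimedean place'', at ``non-dyadic discrete places'', and at ``dyadic places''. But $\Q(x)$ is not a global field and there is no off-the-shelf theorem asserting that isotropy over every $\Q_v(x)$ implies isotropy over $\Q(x)$. For this particular form that implication is \emph{equivalent} to $(3)\Rightarrow(1)$ itself: since $\langle 1,1,1,1\rangle$ is hyperbolic over $\Q_p(x)$ for every odd $p$, your ``local'' conditions collapse exactly to condition~(3), and you are assuming what you want to prove. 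Pourchet's route to $(1)\Leftrightarrow(2)$ avoids the completions $\Q_v(x)$ altogether. One works with the valuations on $\Q(x)$ \emph{trivial on $\Q$}, whose residue fields $K_p$ have characteristic $0$: Cassels--Pfister passes from $\Q[x]$ to $\Q(x)$; multiplicativity of the Pfister form $\langle 1,1,1,1\rangle$ reduces the question to $c_d$ (handled by Lagrange) and to each odd-multiplicity monic irreducible $p(x)$; and the key step, which your sketch does not supply, is the representation theorem over a rational function field $k(x)$ (provable by a second-residue/descent argument or via Milnor's exact sequence for $W(k(x))$) saying that such a $p(x)$ lies in $D_{\Q(x)}(\langle 1,1,1,1\rangle)$ if and only if $\langle 1,1,1,1\rangle$ becomes isotropic over $K_p$, i.e.\ $s(K_p)\le 2$.
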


We now turn to present the effective versions of Pourchet's Theorem given in \cite{MKV23}. We start by recalling their key algorithm on the effective decomposition of a given positive polynomial as a sum of squares when this is possible.

\begin{algorithm}\label{algd} \cite[Algorithm $5$ ``Decomposition  into a sum of $3$ of $4$ squares'']{MKV23}
\par\noindent\underline{Input:} A  polynomial $f(x)\in\Q[x]$, which is a priori known to be a sum of $3$ or $4$ squares.
\par\noindent\underline{Output:} Polynomials $f_1(x),\ldots, f_4(x)\in\Q[x]$ such that $f(x)=f_1(x)^2+\ldots +f_4(x)^2.$
\end{algorithm}

We now describe with more detail  two more of their algorithms, as we will need to use their steps in the text. From now on, $\log$ will denote logarithm in base $2.$

\begin{algorithm}\label{algc}  \cite[Algorithm $6$ ``Reduction to a sum of $4$ squares: odd valuation case'']{MKV23}
\par\noindent\underline{Input:} A positive square-free polynomial $f(x)=c_0+c_1x+\ldots +c_dx^d\in\Q[x].$ The $2$-adic valuations of the coefficients of $f(x)$ are $k_j:=\ord_2(c_j)$ for $0\leq j\leq d.$ Ensure $k_d$ is odd. It is assumed that $f(x)$ is not a sum of $4$ squares.
\par\noindent\underline{Output:} A polynomial $h(x)\in\Q[x]$ such that $f(x)-h(x)^2$ is a sum of $4$ or fewer squares.
\end{algorithm}
\begin{enumerate}
\item Find a positive number $\varepsilon$ such that $\varepsilon<\inf\{f(x),\,x\in\R\}.$
\item Set $l_1:=\lceil-\frac12\cdot\log\varepsilon\rceil.$
\item Set $l_2:=\lceil-\frac{k_0}2\rceil+1.$
\item Set $l_3:=\left\lceil\max\left\{\frac{jk_d-dk_j}{2d-2j},\ 0<j<d\right\}\right\rceil.$
\item Initialize $l:=\max\{l_1,l_2,l_3\}.$
\item While $\gcd(d,2l+k_d)\neq1$ do $l:=l+1.$
\item Return $h(x):=2^{-l}.$
\end{enumerate}

\medskip

Both algorithms \ref{algd} and \ref{algc} have been shown to work in \cite{MKV23}. At the end of that paper, the authors propose the following `procedure, show that it works experimentally in all the cases, and conjecture their validity in general.

\begin{algorithm}\label{alg9} \cite[Algorithm $9$ ``Reduction to sum of $4$ squares'']{MKV23}
\par\noindent\underline{Input:} A positive square-free polynomial $f(x)=c_0+c_1x+\ldots +c_dx^d\in\Q[x].$
\par\noindent\underline{Output:} A polynomial $h(x)\in\Q[x]$ such that $f(x)-h(x)^2$ is a sum of $4$ or fewer squares.
\end{algorithm}

\begin{enumerate}
\item If $f(x)$ is a sum of $4$ squares, then return $h(x):=0.$
\item Set $f_*(x)=c_d+c_{d-1}x+\ldots+c_0x^d.$
\item Find a positive number $\varepsilon$ such that
$$\varepsilon<\inf\{f(x),\ x\in\R\} \ \mbox{and} \ \varepsilon<\inf\{f_*(x), \ x\in\R\}.
$$
\item Initialize $l:=\lceil-\frac12\cdot\log\varepsilon\rceil.$
\item While True do
\begin{enumerate}
\item if $f(x)-2^{-2l}$ is irreducible in $\Q_2[x]$, then return $h(x):=2^{-l}$.
\item if $f(x)-2^{-2l}x^d$ is irreducible in $\Q_2[x],$ then return $h(x):=2^{-l}x^{d/2}.$
\item $l:=l+1.$
\end{enumerate}
\end{enumerate}

\medskip

We conclude this section by proposing our extension of Algorithm \ref{algc} to cover all polynomials $f(x)\in\Q[x]$ of degree multiple of $4.$ Note that the main difference is given in the computation of the $\gcd$ in one of the lines of the algorithm.

\begin{algorithm}\label{algn}  $^{}$
\par\noindent\underline{Input:} A positive square-free polynomial $f(x)=c_0+c_1x+\ldots +c_dx^d\in\Q[x]$ of degree $d=4d_0,$ which is not a sum of $4$ squares.
The $2$-adic valuations of the coefficients of $f(x)$ are $k_j:=\ord_2(c_j)$ for $0\leq j\leq d.$
\par\noindent\underline{Output:} A polynomial $h(x)\in\Q[x]$ such that $f(x)-h(x)^2$ is a sum of $4$ or fewer squares.
\end{algorithm}
\begin{enumerate}
\item If $k_d$ is odd, then apply Algorithm \ref{algc} to the input. Otherwise,
\begin{enumerate}
\item Find a positive number $\varepsilon$ such that $\varepsilon<\inf\{f(x),\,x\in\R\}.$
\item Set $l_1:=\lceil-\frac12\cdot\log\varepsilon\rceil.$
\item Set $l_2:=\lceil-\frac{k_0}2\rceil+1.$
\item Set $l_3:=\left\lceil\max\left\{\frac{jk_d-dk_j}{2d-2j},\ 0<j<d\right\}\right\rceil.$
\item Initialize $l:=\max\{l_1,l_2,l_3\}.$
\item While $\gcd(d,2l+k_d)\neq2$ do $l:=l+1.$
\item Return $h(x):=2^{-l}.$
\end{enumerate}
\end{enumerate}

\section{The proofs}\label{2}
In this section we provide the proofs of the results announced in the introduction.

\begin{proof}[Proof of Theorem \ref{ct1}]
Note that
$f_{k,N}(x)=\Big(\frac{2}{N} x^{2k+1}+\frac{1}{4N}\Big)^2+\frac{63}{2^4\cdot N^2}.
$  

From Proposition \ref{sqr}, we have that $\frac{63}{2^4\cdot N^2}=-\alpha^2,$ with $\alpha\in\Q_2,$  so   from the factorization $$f_{k,N}(x)=\Big(\frac2N x^{2k+1}+\frac{1}{4N}+\alpha\Big)\Big(\frac2N x^{2k+1}+\frac{1}{4N}-\alpha\Big),$$ 
we deduce that this polynomial is square-free and must have an irreducible factor of odd degree. Hence, $f_{k,N}(x)$ is not a sum of $4$ squares thanks to Theorem \ref{padic}. So, Algorithm \ref{alg9} does not stop at step (1).

To continue, notice that  $(f_{k,N})_* =f_{k,N},$ so $\varepsilon$  in (3) of Algorithm \ref{alg9} can be taken as any number $0<\varepsilon<\frac{63}{2^4\cdot N^2}.$ Therefore we have that $l$ should be initialized as $6$ or a larger value if $N>64.$ For all these cases, we have
\begin{equation}\label{fkn}
f_{k,N}(x)-\frac{1}{2^{2l}}=\Big(\frac2N x^{2k+1}+\frac{1}{4N}\Big)^2+\frac{63}{2^4\cdot N^2}-\frac{1}{2^{2l}}= \Big(\frac2N x^{2k+1}+\frac{1}{4N}\Big)^2-\alpha_l^2,
\end{equation}
where $\alpha_l\in\Q_2$ because $\frac{63}{2^4\cdot N^2}-\frac{1}{2^{2l}}=\frac{2^{2(l-2)}\cdot 63-N^2}{N^2\cdot2^{2l}},$  and $2^{2(l-2)}\cdot 63-N^2$ is congruent to $-1$ modulo $8\Z_2$ (recall that $l\geq6$).  Here, we are using Proposition \ref{sqr} again. From Theorem \ref{padic}, we deduce then that the algorithm will not stop at (5) a because \eqref{fkn} factors as a product of two polynomials of odd degree. The case (5) b is similar, as  $(f_{k,N})_* =f_{k,N}.$ This concludes with the proof of the claim.
\end{proof}

\bigskip
\begin{proof}[Proof of Proposition \ref{dos}]
Note first that $f(t)>0$ for all $t\in\R$, and that $k_{2d}$ is even.  We will check that this is the only restriction of the input that $f(x)$ does not fulfill. 

To do so, note that  $-8a+1$ is a square in $\Q_2$ thanks to Proposition \ref{sqr}. So, there exists $\alpha\in\Q_2$ such that
$$f(x)=(g(x)+\alpha)\cdot(g(x)-\alpha).
$$
Each of the factors $(g(x)\pm\alpha)$ has degree $2k+1$, so they are either irreducible or have an irreducible factor of odd degree.  From Theorem \ref{padic}, we then deduce that $f(x)$ is not a sum of $4$ squares in $\Q[x]$, so the input of the algorithm is correct except for $k_{2d}$ being even.

To continue, note that if $l\geq2,$
$$
\begin{array}{ccl}
2^{2l}\big(f(x)-\frac{1}{2^{2l}}\big)& = & 2^{2l}f(x)-1=2^{2l}g(x)^2+(2^{2l}(8a-1)-1)
\\ & =&  2^{2l}g(x)^2-\alpha_l^2=(2^lg(x)+\alpha_l)(2^lg(x)-\alpha_l),
\end{array}$$
with $\alpha_l\in\Q_2$ thanks to Proposition \ref{sqr}. We then have that this expression has an irreducible factor of odd multiplicity and odd degree, so again thanks to Theorem \ref{padic} we deduce that $f(x)-\frac{1}{2^{2l}}$ is not a sum of four squares in $\Q[x]$ for all $l\geq2.$
\end{proof}

\bigskip
\begin{proof}[Proof of Theorem \ref{uno}]
If $k_d$ is odd, then the algorithm clearly holds. Suppose now that  $k_d=2k.$  
Then we have that $$\gcd(d,2l+k_d)=\gcd(4d_0,2l+2k)=2\gcd(2d_0, l+k)$$ is always a multiple of $2$ and at some point -at most up to $2d_0$ iterations, it will be $2.$ Denote $l_0$ the value of $l$ that satisfies this. Then, the Newton diagram of $f(x)-\frac{1}{2^{2l_0}}$ is a segment with three lattice points: $(0,-2l_0), (2d_0,k-l_0), (4d_0, 2k).$

From here we deduce that, either $f(x)-\frac{1}{2^{2l_0}}$ is irreducible in $\Q_2[x]$ (and hence this polynomial is the sum of $4$ squares in $\Q[x]$ thanks to Theorem \ref{padic}), or $f(x)-\frac{1}{2^{2l_0}}$ factors in $\Q_2[x]$  as the product of two polynomials whose Newton diagrams are parallel to the segment with endpoints $(0,0), (2d_0,k+l_0),$ and have the same length as it. As $\gcd(2d_0, k+l_0)=1,$ there are no interior lattice points in this segment, so each of these factors are irreducible thanks to Proposition \ref{GEC}. So,  $f(x)-\frac{1}{2^{2l_0}}$ actually factors in $\Q_2[x]$ as the product of two irreducible factors of even degree $2d_0$. By using Theorem \ref{padic} again, we conclude that it is a sum of four squares in $\Q[x]$. 
\end{proof}

\bigskip
\begin{proof}[Proof of Theorem \ref{nos}]
As in the proof of Proposition \ref{kont}, it can be shown $g(x):=f(x)-\left(\frac{1}{2^\ell}x^{d/2}+\frac{2^a}{N}\right)^2$ is positive everywhere for $N,\,\ell$ big enough, as the coefficients of $\left(\frac{1}{2^\ell}x^{d/2}+\frac{2^a}{N}\right)^2$ tend to $0$ when $\ell,\,N\to\infty.$

We choose a large odd $N\in\N,$  and  compute  the $2$-adic valuation of the constant coefficient of $g(x)$: as $N^2\equiv 1 \, \mbox{mod} \,4,$ it is equal to
$$f(0)-\frac{2^{2a}}{N^2}=\frac{2^{2a}(4k'+2)}{N^2}=\frac{2^{2a+1}(2k'+1)}{N^2},
$$
for a suitable $k'\in\N.$ So its valuation is equal to $2a+1.$ Working around the leading coefficient of $g(x),$  we easily get that its  $2$-adic valuation is equal to  $-2\ell$. Note also that there is another coefficient of $g(x)$ with very low valuation if $\ell\gg 0:$ the one corresponding to $x^{d/2}$, with valuation $a+1-\ell.$

To compute the Newton diagram of $g(x),$ one should pay attention to the relative position of the points 
$(0, 2a+1),\  (d/2, a+1-\ell),$ and $(d, -2\ell).$ The point in the middle lies above the segment $S$ with endpoints $(0, 2a+1)$ and $(d, -2\ell).$ In addition, the points of the form $(i,\ord_2(c_i)),$ where $c_i$ is a non-zero coefficient of $f(x),$ belong to the first quadrant. We conclude that the segment $S$ is equal to $N(g)$ for $\ell$ big enough. One can then choose $\ell$ big enough such that $S$ has no interior lattice points, which then proves that $g(x)$ is irreducible in $\Q_2[x]$ (see Proposition \ref{GEC}) and hence the result follows from Theorem \ref{padic}.
\end{proof}

\bigskip
\begin{proof}[Proof of Theorem \ref{gr4}]
From Proposition \ref{kont} applied to $f(x)=\sum_{i=0}^{4k} c_i x^i$ and $g(x)=-(x^2+x+1)^{2k}$, we deduce that there exists $\ell_0\in\N$ such that if $\ell\geq\ell_0$ then $f(t)-\frac{1}{2^{2\ell}}(t^2+t+1)^{2k}>0$ for all $t\in\R$. Next, we factor $2^{2\ell}f(x)-(x^2+x+1)^{2k}$ as a product of irreducible polynomials in $\Q_2[x]$: as its leading coefficient is  $u:=2^{2\ell}c_{4k}-1,$ which is a unit in $\Z_2,$ we then have that 
\begin{equation}\label{tic}
2^{2\ell}f(x)-(x^2+x+1)^{2k}=u\,\prod_{i=1}^N p_i(x)^{r_i},
\end{equation}
with  $p_i(x)\in\Q_2[x]$ monic and irreducible, $ 1\leq i\leq N.$ This implies, thanks to Proposition \ref{fact}, that the factorization \eqref{tic} takes place in $\Z_2[x].$ We then take classes in both sides in the quotient ring $\Z_2[x]/2\Z_2[x]\simeq \F_2[x],$ to obtain
$$[x^2+x+1]^{2k}=\prod_{i=1}^N [p_i(x)]^{r_i}.
$$
The polynomial $[x^2+x+1]$ is irreducible in $\F_2[x],$ and note that each of the $[p_i(x)]$ has the same degree as $p_i(x)$ (because the latter is monic), which implies that each of them must be a power of a quadratic polynomial in the quotient ring. We deduce then that all the $p_i(x)$'s have even degree, and the claim now follows from Theorem  \ref{padic}.
\end{proof}

\bigskip
\begin{proof}[Proof of Theorem \ref{picky}]
Write $f(x)=\sum_{j=0}^d c_j x^j$ and denote $k_0=\ord_2(c_0)$ and $k_1=\ord_2(c_1)$. By Proposition \ref{kont}, there exists $\ell'\in\N$ such that if $\ell\geq\ell'$ then $f(t)-\frac{1}{2^{2\ell}}(t^2+t+1)^{2k}t^2>0$ for all $t\in\R$. 

Suppose first that $k>0.$ Choose $\ell\in\N$ such that $\ell>\max\{1,\ell',\,k_0/2-k_1+2\}$. Denote $q(x):=2^{2\ell}f(x)-(x^2+x+1)^{2k}x^2$. By using Hensel's Lemma (Theorem \ref{hensel}) with $(x^2+x+1)^{2k}$ and $x^2$, we deduce that $q(x)=g(x)h(x),$ where $g(x)\in\Z_2[x]$ has degree $4k$, $h(x)\in\Z_2[x]$ has degree $2,$ and $[g(x)]= [x^2+x+1]^{2k}$ and $[h(x)]= [x]^2$ in $\F_2[x].$
Reasoning as in the proof of Theorem \ref{gr4}, we deduce that all the irreducible factors of $g(x)$ have even degree. The proof of the claim would be complete -Thanks to Theorem \ref{padic} again- if we show that $h(x)$ is irreducible in $\Q_2[x]$. For this it will be enough to show that $q(x)$ does not have roots in $\Q_2$.

Suppose that this is not the case, and denote with $\alpha_\ell\in\Q_2$ a root of $q(x)$. The leading coefficient of $q(x)$ equals $u:=2^{2\ell}c_d-1$, which is a unit in $\Z_2$. Since $\alpha_\ell$ is a root of the monic polynomial $u^{-1}q(x)\in\Z_2[x]$, by Corollary \ref{cori} we get that $\alpha_\ell\in\Z_2$. Hence we can write $\alpha_\ell=2^{a_\ell}(1+2q_\ell)$, with $a_\ell\in\Z_{\geq0}$ and $q_\ell\in\Z_2.$ 

We then have
\begin{equation}\label{cuzi}
2^{2\ell}f(\alpha_\ell)=(\alpha_\ell^2+\alpha_\ell+1)^{2k}\alpha_\ell^2=2^{2a_\ell}(1+2q_\ell)^2(\alpha_\ell^2+\alpha_\ell+1)^{2k}.
\end{equation}
Since $\alpha_\ell^2+\alpha_\ell+1\neq0$, the $2$-adic valuation of the right-hand-side of \eqref{cuzi} is equal to $2a_\ell.$ On the left hand side, we have
\begin{equation}\label{cuz}
2^{2\ell}f(\alpha_\ell)=2^{2\ell+k_0}\tilde{c}_0+2^{2\ell+k_1+a_\ell}\tilde{c}_1(1+2q_\ell)+2^{2a_\ell}\sum_{j=2}^dc_j (1+2q_\ell)^j2^{2\ell+a_\ell(j-2)},
\end{equation}
where $\tilde c_0=2^{-k_0}c_0,\tilde c_1=2^{-k_1}c_1\in\Z$.
As $\ell>0$, the third of the three summands of the right-hand-side of \eqref{cuz} has $2$-adic valuation strictly larger than $2a_\ell$. So we must have either
$2\ell+k_1+a_\ell=2a_\ell\leq 2\ell+k_0$ or $2\ell+k_0=2a_\ell<2\ell+k_1+a_\ell.$
For the first case we have $a_\ell=2\ell+k_1.$ But note then that this implies that $2a_\ell=4\ell+2k_1\leq2\ell+k_0\iff 2\ell\leq k_0-2k_1,$ which contradicts the choice of $\ell$. 

So, we must have that $a_\ell=\ell+\frac{k_0}{2}<2\ell+k_1,$ which holds thanks to the hypothesis on $\ell$. Note that this implies that $k_0$ must be even. We now factor $2^{2a_\ell}$ from each side of \eqref{cuzi} and we obtain (recall that $2a_\ell=2\ell+k_0$):
\begin{equation}\label{eq:factor}
    \tilde{c}_0+2^{k_1+a_\ell-k_0}\tilde{c}_1(1+2q_\ell)+\sum_{j=2}^dc_j (1+2q_\ell)^j2^{2\ell+a_\ell(j-2)}=((1+2q_\ell)(\alpha_\ell^2+\alpha_\ell+1)^{k})^2.
\end{equation}
As $k_1+a_\ell-k_0=k_1+\ell-k_0/2>2$ because of our choice of $\ell>1,$  we deduce that the left-hand-side of \eqref{eq:factor} is congruent to $\tilde{c}_0\ \mbox{mod}\,8\Z_2.$ The right-hand-side of \eqref{eq:factor} is the square of a unit in $\Z_2$, thus it is congruent to $1$ modulo $8\Z_2.$
Therefore, $c_0=2^{k_0}(8b+1)$ with $k_0\in 2\Z$ and $b\in\Z_{\geq0}$. This concludes the proof of the claim if $k>0.$ 

For the case $k=0,$ take $\ell\in\N$ such that $\ell>\max\{2,\ell',(k_0+5)/2\}$. We only have to check that the quadratic polynomial $q(x):=2^{2\ell}f(x)-x^2$ does not have roots in $\Q_2$ if $c_0$ is as in the hypothesis. But it is easy to check that the discriminant $\Delta$ of $q(x)$ is equal to
\begin{equation}\label{disc}
\Delta=2^{4\ell}c_1^2-2^{2\ell+2}(2^{2\ell}c_2-1)c_0=2^{2\ell+2+k_0}(2^{2\ell-2-k_0}c_1^2-(2^{2\ell}c_2-1)\tilde c_0),
\end{equation}
where $\tilde c_0=2^{-k_0}c_0$ is a unit of $\Z_2$, and $\ord_2\Delta=2\ell+2+k_0$. If $q(x)$ has a root in $\Q_2$, then $\Delta$ is a square in $\Q_2$. We deduce then that $k_0$ must be even. Hence $c_0=2^{2a}\tilde c_0$ for some $a\in\Z_{\geq0}$. Moreover, $2^{2\ell-2-k_0}c_1^2-(2^{2\ell}c_2-1)\tilde c_0\in\Z_2$ is a unit that is a square in $\Q_2$ and is congruent to $\tilde c_0$ modulo $8\Z_2$. Proposition \ref{sqr} implies that $\tilde c_0$ is of the form $8b+1$ with $b\in\Z_{\geq0}$. The claim follows for this case.



To conclude, we need to prove that if $c_0=2^{2a}(8b+1)$ and $f(x)$ is square-free, then $g(x):=f(x)-\frac{1}{2^{2\ell}}(x^2+x+1)^{2k}x^2$ is not a sum of four squares for $\ell\gg0.$ Indeed, if we set $\gamma=2^{\ell+a},\,\delta=\ell+a+1$, and $q(x)=2^{2\ell}f(x)-(x^2+x+1)^{2k}x^2$, then we deduce that 
$$\begin{array}{ccl}
q(\gamma) &= &2^{2\ell}\sum_{j=0}^d c_ j\,2^{(\ell+a)j}-2^{2(\ell+a)}\big(2^{2(\ell+a)}+2^{\ell+a}+1 \big)^{2k}\\
 & =& 2^{2(\ell+a)}\big(\sum_{j=1}^d c_j 2^{(\ell+a)j-2a} +8b+1- (2^{2(\ell+a)}+2^{\ell+a}+1)^{2k}\big).
\end{array}$$ 
If $\ell\geq a+3,$ it is straightforward to verify that the expression between parenthesis above is a multiple of $8,$ which shows then that $q(\gamma)\equiv 0 \ \mbox{mod}\, 2^{2\delta+1}\Z_2.$ 
In addition, it is not hard to verify that $q'(\gamma)\equiv 0\ \ \mbox{mod} \ 2^{\delta}\Z_2,$ and  $q'(\gamma)\not\equiv 0\ \ \mbox{mod} \ 2^{\delta+1}\Z_2.$ From Theorem \ref{newton} we deduce then that there exists $\overline{\gamma}\in\Z_2$ which is a root of $g(x)$. If this is a simple root, then because of Theorem \ref{padic} this polynomial cannot be a sum of four squares, as it has a linear irreducible factor in its factorization.  If $\overline{\gamma}$ is a mutiple root, then $2^{2\ell}$ would be  a root of  $p(\lambda):=\mbox{disc}_x(\lambda\,f(x)-(x^2+x+1)^{2k}x^2)\in\Z[\lambda].$ As $f(x)$ is square-free, from Proposition \ref{dis} we deduce that $p(\lambda)$ is not identically zero, and hence it has a finite number of roots. If $\ell\gg0$ then it is clear that $p(2^{2\ell})\neq0,$ and hence the claim follows.
\end{proof}

\begin{remark}
The conditions on $c_0$ imposed in the hypothesis of Theorem \ref{picky} are necessary, as if we pick for instance $f(x)=x^2+1,$ for all $\ell>1,$ thanks to Proposition \ref{sqr} we have a factorization  $(2^{2\ell}-1)x^2+2^{2\ell}=(2^{\ell}-\alpha_\ell x)(2^{\ell}+\alpha_\ell x)$ with $\alpha_\ell\in\Z_2.$ From Theorem \ref{padic} we deduce then that this polynomial is not a sum of $4$ squares.
\end{remark}


\begin{thebibliography}{XXXX00}

\bibitem[BS66]{BS66}
Borevich, A. I.; Shafarevich, I. R. 
\newblock{\em Number theory.\/} 
\newblock Translated from the Russian by Newcomb Greenleaf. Pure and Applied Mathematics, Vol. 20. Academic Press, New York--London, 1966. 

\bibitem[CG00]{CG00}
Cantor, David G.; Gordon, Daniel M. 
\newblock{\em Factoring polynomials over p-adic fields.\/}
\newblock Algorithmic number theory (Leiden, 2000), 185--208, Lecture Notes in Comput. Sci., 1838, Springer, Berlin, 2000. 

\bibitem[CLO07]{CLO07}
Cox, David; Little, John; O'Shea, Donal.
\newblock{\em Ideals, varieties, and algorithms. An introduction to computational algebraic geometry and commutative algebra.\/} 
\newblock Third edition. Undergraduate Texts in Mathematics. Springer, New York, 2007. 

\bibitem[Fra27]{fra27}
Franklin, Philip. 
\newblock{\em A theorem of Frobenius on quadratic forms.\/} 
\newblock Bull. Amer. Math. Soc. 33 (1927), no. 4, 447--452.


\bibitem[Gou93]{gou}
Gouv\^ea, Fernando Q. 
\newblock{\em $p$-adic numbers.}
\newblock An introduction. Universitext. Springer-Verlag, Berlin, 1993. 


\bibitem[Jac35]{jac35}
Jacobi, C. G. J. 
\newblock{\em Theoremata nova algebraica circa systema duarum aequationum
inter duas variabiles propositarum (1835),\/} 
\newblock in Gesummelte Werke, Reimer, Berlin,
1884, Vol. 3,  287--294.

\bibitem[Jac36]{jac36}
Jacobi, C. G. J.
\newblock{\em De relationibus, quae locum habere debeunt inter puncta
intersectionis duarum curvarum vel trium superficierum algebraicarum dati ordi-
nis, simul cum enodatione paradoxi algebraici (1836),\/}
\newblock in Gesummelte Werke, Reimer, Berlin,
1884, Vol. 3,  331--354.

\bibitem[MKV23]{MKV23}Magron, Victor; Koprowski, Przemyslaw; Vaccon, Tristan.
\newblock{\em Pourchet's theorem in action: decomposing univariate nonnegative polynomials as sums of five squares.\/} 
\newblock Proceedings of the International Symposium on Symbolic \& Algebraic Computation (ISSAC 2023), 425--433, ACM, New York, 2023.

\bibitem[Pou71]{pou71}
Pourchet, Y. 
\newblock{Sur la repr\'esentation en somme de carr\'es des polyn\^omes \`a une ind\'etermin\'ee sur un corps de nombres alg\'ebriques.\/} 
Acta Arith. 19 (1971), 89--104. 

\bibitem[Raj93]{raj93}
Rajwade, A. R. 
\newblock{\em Squares.\/} 
\newblock London Mathematical Society Lecture Note Series, 171. Cambridge University Press, Cambridge, 1993.

\bibitem[Syl52]{syl52}
 Sylvester, James Joseph.
 \newblock{\em A demonstration of the theorem that every homogeneous quadratic polynomial is reducible by real orthogonal substitutions to the form of a sum of positive and negative squares.\/} 
 \newblock Philosophical Magazine. 4th Series. 4 (23): 138--142, 1852

\bibitem[US92]{US92}
Uteshev, Alexei Yu.; Shulyak, Sergei G. 
\newblock{\em Hermite's method of separation of solutions of systems of algebraic equations and its applications.\/}
\newblock  Linear Algebra Appl. 177 (1992), 49--88. 
\end{thebibliography}
\end{document}